\def\subsubsection{\@startsection{subsubsection}{3}%
  \z@{.5\linespacing\@plus.7\linespacing}{.1\linespacing}%
  {\normalfont\itshape}}
\newtheorem{thm}{Theorem}[section]
\newtheorem{lem}[thm]{Lemma}
\newtheorem{lemma}[thm]{Lemma}
\newtheorem{conjecture}[thm]{Conjecture}
\newtheorem{prop}[thm]{Proposition}
\newtheorem{definition}[thm]{Definition}
\newtheorem{question}[thm]{Question}
\newtheorem{claim}[thm]{Claim}
\newcommand{\N}{\mathbb N}
\renewcommand{\P}{\mathbb P}
\newcommand{\PP}{\mathbb{P}}
\newcommand{\ignore}[1]{}
\begin{document}

\author{Vojt\u{e}ch Dvo\u{r}\'ak}
\address[Vojt\u{e}ch Dvo\u{r}\'ak]{Department of Pure Maths and Mathematical Statistics, University of Cambridge, UK}
\email[Vojt\u{e}ch Dvo\u{r}\'ak]{vd273@cam.ac.uk}

\author{Rebekah Herrman}
\address[Rebekah Herrman]{Department of Mathematical Sciences, The University of Memphis, Memphis, TN}
\email[Rebekah Herrman]{rherrman@memphis.edu}

\author{Peter van Hintum}
\address[Peter van Hintum]{Department of Pure Maths and Mathematical Statistics, University of Cambridge, UK}
\email[Peter van Hintum]{pllv2@cam.ac.uk}

\title[The Eternal Game Chromatic Number of Random Graphs] % running head version
{The Eternal Game Chromatic Number of Random Graphs}

\linespread{1.3}
\pagestyle{plain}

\begin{abstract}
The eternal graph colouring problem, recently introduced by Klostermeyer and Mendoza \cite{klostermeyer}, is a version of the graph colouring game, where two players take turns properly colouring a graph. In this note, we study the eternal game chromatic number of random graphs. We show that with high probability  $\chi_{g}^{\infty}(G_{n,p}) = (\frac{p}{2} + o(1))n$ for odd $n$, and also for even $n$ when $p=\frac{1}{k}$ for some $k \in \N$. The upper bound applies for even $n$ and any other value of $p$ as well, but we conjecture in this case this upper bound is not sharp.  Finally, we answer a question posed by Klostermeyer and Mendoza.
\end{abstract}
\maketitle 

\section{Introduction}
The \textit{vertex colouring game} was introduced by Brams \cite{gardner} in 1981; it was later rediscovered by Bodlaender \cite{bodlaender}. In this game, two players, Alice and Bob, take turns choosing uncoloured vertices from a graph, $G$, and assigning a colour from a predefined set $\{1,\dots, k\}$, such that the resulting partial colouring of $G$ is proper. Bob wins, if at some stage, he or Alice chooses a vertex that cannot be properly coloured. Alice wins if each chosen vertex can be properly coloured. The game chromatic number $\chi_g(G)$ is the smallest integer $k$ such that if there are $k$ colours, Alice has a winning strategy in the vertex colouring game. This number is well defined, as Alice can win if the number of colours is at least the number of vertices. The vertex colouring game has been well studied \cite{dinski, faigle, guan1999game, zhu1999game}. In particular, Bohman, Frieze and Sudakov \cite{bohman} studied the game chromatic number of random graphs $G_{n,p}$ and found that with high probability, $(1-\epsilon)\frac{n}{\log(pn)}\leq\chi_g(G_{n,p})\leq (2+\epsilon)\frac{n}{\log(pn)}$, where all logarithms have base $\frac{1}{1-p}$.  Keusch and Steger \cite{keusch} improved the result to $\chi_g(G_{n,p})=(1+o(1)) \frac{n}{\log(pn)}$ with high probability, implying $\chi_g(G_{n,p})=(2+o(1))\chi(G_{n,p})$ with high probability by a classic result of Bollob\'as \cite{bollobas1988chromatic}. Both of the results require lower bounds on $p$ decaying with $n$ slowly. Frieze, Haber and Lavrov \cite{Frieze2011OnTG} studied the game on sparse random graphs, finding that for $p=d/n$, $\chi(G_{n,p})=\Theta(\frac{d}{\ln(d)})$, where $d\leq n^{-1/4}$ is at least a large constant.
 
This vertex colouring game requires Alice and Bob to colour the vertices once, attaching no value to the colouring that is produced at the end of the round. In a variant of the game called the \emph{eternal vertex colouring game}  recently introduced by Klostermeyer and Mendoza \cite{klostermeyer}, the focus is shifted by continuing the game after a colouring is produced. 

In the eternal vertex colouring game, there is a fixed set of colours $\{1,\dots, k\}$. The game consists of rounds, such that in each round, every vertex is coloured exactly once. The first round proceeds precisely the same as the vertex colouring game, with Alice taking the first turn. During all further rounds, players keep choosing vertices alternately.  After choosing a vertex, the player assigns a colour to the vertex which is distinct from its current colour such that the resulting colouring is proper. Each vertex retains its colour between rounds until it is recoloured. Bob wins if at any point the chosen vertex does not have a legal recolouring, while Alice wins if the game is continued indefinitely. The eternal game chromatic number $\chi_g^{\infty}(G)$ is the smallest number $k$ such that Alice has a winning strategy. Note that if $k\geq\Delta(G)+2$, there will always be a colour available for every vertex, so $\chi_g^\infty(G)$ is well-defined.

As Alice and Bob alternate their turns, the parity of the order of the graph determines whose turn it is at the beginning of the second round. For even order, Alice always has the first move, while for odd order Bob gets to play first in all even rounds.

This game has not been well studied, but Klostermeyer and Mendoza \cite{klostermeyer} obtained some basic results pertaining to paths, cycles, and balanced bipartite graphs. 

In this paper, we determine $\chi_{g}^{\infty}(G_{n,p})$ for 
$n$ odd by putting together the following two results.

\begin{thm} For all $p\in (0,1)$ constant, for odd $n$, with high probability,
$$\chi_{g}^{\infty}(G_{n,p}) = (1+o(1))\frac{pn}{2}.$$
\end{thm}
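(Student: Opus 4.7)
The plan is to prove matching upper and lower bounds $\chi_g^\infty(G_{n,p}) \le (1+o(1))pn/2$ and $\chi_g^\infty(G_{n,p}) \ge (1-o(1))pn/2$, both built on the standard concentration facts that w.h.p.\ in $G_{n,p}$ every vertex has degree $(1+o(1))pn$ and, for every $S \subseteq V$ of macroscopic size, $|N(v) \cap S| = (1+o(1))p|S|$ uniformly in $v$.

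For the upper bound I would have Alice play with $k = (1+\epsilon)pn/2$ colors while maintaining two invariants after every one of her moves: (I1) the coloring is proper with every color class of size close to $n/k$, and (I2) for every vertex $v$ the number $f(v)$ of distinct colors in $N(v)$ stays at most $k-2$, so that any vertex has a legal recoloring. In round~$1$ Alice uses her majority of moves (because $n$ is odd) to construct such a balanced proper coloring, ``undoing'' Bob's drift on her turns. A first-moment plus concentration calculation shows that when class sizes are balanced, $f(v) = (1-(1-p)^{n/k}+o(1))k$ uniformly; since $(1-p)^{n/k}$ is a positive constant depending only on $p$ and $\epsilon$, (I2) holds with $\Omega(k)$ slack. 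In subsequent rounds Alice plays reactively: after each Bob move she picks a vertex and color to simultaneously restore class balance and preserve (I2), using the $\Omega(k)$ slack to absorb Bob's per-move perturbations.

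For the lower bound Bob is given only $k = (1-\epsilon)pn/2$ colors. He fixes a target vertex $v$ and drives $f(v)$ upward, aiming for $f(v) \ge k-1$ on some Alice turn. Because $n$ is odd, in every even round Bob moves first with $(n+1)/2$ moves, which is more than $|N(v)| = (1+o(1))pn$ when $p \le 1/2$ (and still a fixed positive fraction of $|N(v)|$ when $p > 1/2$). When Bob recolors $u \in N(v)$, he picks a color $c$ satisfying simultaneously (i)~$c \ne c(u)$, (ii)~$c \notin c(N(u))$, and (iii)~$c \notin c(N(v)\setminus\{u\})$; uniform concentration guarantees $\Omega(k)$ such colors exist whenever $f(v) < (1-\delta)k$ and classes remain approximately balanced. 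Alice's only defense is to preempt Bob inside $N(v)$ using repeat colors, but she has strictly fewer moves per even round and never plays first. A round-by-round accounting then shows $f(v)$ increases by a positive constant each round, reaching $k-1$ in $O(n)$ rounds and winning the game for Bob.

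The main obstacle in both directions is the within-round interplay. For the lower bound one must show that Bob's three-way constraint (i)--(iii) always admits a solution despite the coloring evolving under Alice's adversarial play; this hinges on the uniform fact that $|N(u) \cap C_i| = (1+o(1))p|C_i|$ for every vertex $u$ and every macroscopic color class $C_i$, so that Bob's supply of ``fresh'' colors for $N(v)$ remains $\Omega(k)$ throughout the round, regardless of which vertices Alice preempts. Symmetrically, for the upper bound Alice's compensating move must not push some other vertex $w$ past the $f$-threshold; this is controlled by tracking a potential $\Phi = \sum_v \max\{0, f(v)-(k-2)\}$ and showing that Alice always has a move keeping $\Phi = 0$, using the same uniform concentration to guarantee a legal balancing color is available at every vertex she might play.
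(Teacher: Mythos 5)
Your proposal has the right skeleton (Alice limits the colours per neighbourhood; Bob fills one neighbourhood and exploits the odd parity), but both halves rest on concentration claims that do not apply at this colour scale, and the lower bound misses the central obstruction. With $k=\Theta(n)$ colours on $n$ vertices, a balanced colour class has \emph{constant} size $n/k\approx 2/p$, so there are no ``macroscopic colour classes'' and the uniform statement $|N(u)\cap C_i|=(1+o(1))p|C_i|$ is vacuous; likewise $f(v)=(1-(1-p)^{n/k}+o(1))k$ is a first-moment fact about a \emph{random} balanced colouring, whereas here the colouring is steered adversarially by Bob. Since $|N(v)|\approx pn>k$, there exist perfectly balanced proper colourings in which some $N(v)$ contains all $k$ colours, so balance alone gives no slack in (I2), and ``Alice always has a move keeping $\Phi=0$'' is precisely the hard statement, merely asserted. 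The paper instead proves colouring-free structural lemmas with the right quantifier order --- whp, for \emph{every} colouring with $(\frac p2+\epsilon)n$ colours, only $O(\log n)$ vertices can see nearly all colours --- and identifies the true binding constraint on Alice: she must play in every neighbourhood nearly as often as Bob while also using few distinct colours, with only half the moves. This simultaneous tracking across all $n$ neighbourhoods is handled by the dangerous-vertex/mirroring device (properties (ii)--(iv)); nothing in your invariants addresses it.

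On the lower bound there are two genuine gaps. First, the cross-round ratchet (``$f(v)$ increases by a positive constant each round, reaching $k-1$ in $O(n)$ rounds'') is unsound: every vertex is recoloured once in every round, so Alice resets $f(v)$ each round by recolouring roughly half of $N(v)$ with repeated colours; nothing accumulates. Bob must finish within a \emph{single} round --- in the paper, round 1 --- and odd parity is used only so that Bob moves first in round 2, before Alice can delete a uniquely represented colour from $N(v)$. Second, you claim Alice's only defence is preempting inside $N(v)$, but her strongest defence is external: she can play a colour $c$ absent from $N(v)$ twice, on vertices $a,b$ outside $N(v)$ with $N(a)\cup N(b)$ covering the still-uncoloured part of $N(v)$, which locks $c$ out of $N(v)$ for the rest of the round (already-coloured vertices of $N(v)$ cannot be recoloured again). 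Your ``$\Omega(k)$ supply of fresh colours'' argument is useless in the endgame, when only a few \emph{specific} colours are missing and each can be blocked individually; the bulk of the paper's Section 3 --- the immediate-duplication response (move type (1)), the blocking moves against near-``double blocks,'' the stopping time $T$ with at most $O(K)$ exceptional moves, and the structural lemma that whp no pair of vertices nearly dominates the uncoloured remainder of $N(v)$ until it is very small --- exists exactly to defeat this defence, and your proposal has no counterpart to it.
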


\begin{thm}
For all $p\in (0,1)$ constant, for even $n$,  
with high probability, $$\chi_{g}^{\infty}(G_{n,p}) \leq (1+o(1))\frac{pn}{2}.$$
Moreover, when $p=\frac{1}{l}$ for some $l \in \N$, $$\chi_{g}^{\infty}(G_{n,p}) = (1+o(1))\frac{pn}{2}.$$
\end{thm}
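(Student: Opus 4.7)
The plan is to handle the upper and lower bounds separately. For the upper bound with general $p$, I would give Alice a strategy using $k = \lceil(1+\varepsilon)pn/2\rceil$ colours. The plan is to adapt the strategy used for odd $n$ in Theorem~1; nothing in such an upper-bound strategy should fundamentally depend on who moves first. First I would use standard Chernoff and union bound arguments to show that, with high probability, $G_{n,p}$ is very regular: every degree is close to $pn$, every codegree close to $p^2 n$, and analogous concentration holds on small sets. Alice's move rule on each of her turns is then to pick a legal colour for the chosen vertex that keeps the colour classes as balanced as possible across the neighbourhoods, thereby preserving availability of colours elsewhere. The invariant I would maintain is that every uncoloured vertex has $\Omega(n)$ legal colours remaining, so that no vertex ever becomes stuck; a Chernoff-plus-union-bound calculation shows the invariant survives throughout the game with high probability.

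For the lower bound when $p=1/l$, the plan is to exhibit a Bob strategy that forces Alice to use at least $(1-o(1))pn/2$ colours. The heuristic is that in a proper $k$-colouring of a typical $G_{n,p}$ the average colour-class size is $n/k$, and Bob should play adversarially to keep the colour classes close to size $2/p$. When $1/p=l\in\N$, the target class size $2l$ is an integer, which is exactly what enables an explicit partition-and-counting argument: Bob pairs up vertices in a way compatible with the expected degree structure, and by recolouring adversarially on his turns he forces Alice to introduce a new colour roughly every $2l$ turns, yielding at least $n/(2l) = pn/2$ distinct colours over a round. Adapting the argument from the odd-$n$ case of Theorem~1 to the even case should then go through so long as $1/p$ is an integer, since that is what lets Bob hit the right class sizes on the nose.

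The upper bound should be a routine adaptation of the odd-$n$ argument, since only the lower bound sees the parity of $n$. The real difficulty is the lower bound: for even $n$, Alice plays first in every round and can partially absorb Bob's disruptive moves, unlike the odd case where Bob plays first in every even round. The role of $p=1/l$ is precisely to recover a divisibility structure that compensates for Alice's first-move advantage; designing Bob's strategy to exploit this, and rigorously tracking its effect against Alice's best response, is the main technical step. Removing the $p=1/l$ assumption looks genuinely harder, consistent with the conjecture in the abstract that the upper bound need not be sharp for general $p$.
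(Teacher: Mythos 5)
There is a genuine gap in both halves of your proposal. On the upper bound, your plan puts the probability in the wrong place: once $G_{n,p}$ is revealed, Bob plays adversarially, so ``a Chernoff-plus-union-bound calculation shows the invariant survives throughout the game'' cannot be run --- there is no randomness left in the play, and the game lasts infinitely many rounds besides. The paper instead isolates four structural properties that hold whp for the graph, and then proves a fully \emph{deterministic} per-round lemma (Lemma 2.5) showing Alice's strategy works on any graph with those properties. Quantitatively your invariant also misses the point: with $k=(\frac{p}{2}+\epsilon)n$ colours and all degrees close to $pn$, every neighbourhood contains about $2k$ vertices, so if Bob plays a fresh colour at each of his moves inside $N(v)$ he saturates $N(v)$ unless Alice herself colours nearly half of $N(v)$ using only $o(n)$ colours. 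Thus the crux is Alice's \emph{vertex-selection} rule, which your ``choose a balancing colour'' strategy never specifies; the paper's strategy has Alice mirror Bob's last move with respect to the evolving set of dangerous vertices, with top priority given to vertices missing fewer than $\beta n$ colours, and the danger being tracked is not uncoloured vertices running out of colours mid-round but coloured vertices whose neighbourhoods see all $k$ colours at a round boundary --- the genuinely eternal feature of the game, which your invariant does not capture. (You are right, though, that the upper bound is parity-uniform; the paper's Section 2 makes no distinction between odd and even $n$.)

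On the lower bound, your proposal misreads the win condition: the palette of $k$ colours is fixed in advance, and Bob wins by producing a vertex with no legal recolouring; he cannot ``force Alice to use at least $(1-o(1))pn/2$ colours,'' and forcing a new colour every $2l$ turns creates no stuck vertex. The real even-$n$ obstruction --- which you correctly flag but then do not address --- is that Alice moves first in round two, so if Bob saturates a single neighbourhood $N(v)$, Alice recolours one vertex of $N(v)$ and the threat evaporates. The paper's solution is to make \emph{several} vertices see all colours simultaneously, robustly to any single recolouring: fix $l$ vertices $X$, partition $V\setminus X$ into cells $X_I$ according to the neighbourhood trace $I=N(v)\cap X$, and assign colour sets $Y_I$ so that each colour is planted in all cells corresponding to the parts of some partition of $X$ into at least two parts; then one recolouring cannot blind every $x\in X$ at once, and Bob's ability to realize any such assignment in round one is a separate result (Proposition 4.1, a generalization of the odd-$n$ argument). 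The hypothesis $p=\frac{1}{l}$ enters not through integrality of the class size $\frac{2}{p}$, as you suggest, but through the solvability of the marginal system in Lemma 5.3: one needs weights $g$ on the partitions of an $l$-set with $\sum_{T\,:\,A\in T} g(T)=p^{|A|}(1-p)^{l-|A|}$ for all $A\subset X$, which the paper constructs by counting ordered partitions into $k-1$ possibly empty blocks when $p=\frac1k$, and which fails for other $p$ --- precisely why the paper conjectures the upper bound is not sharp for $p\notin\{\frac12,\frac13,\dots\}$. This assignment machinery, together with the correction term $\frac{p^l}{2}n$ coming from the common intersection cell and the choice of $l$ with $p^l<\epsilon/100$, is the content your sketch is missing.
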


%We also manage to get the tight lower bound for special values of $p$, using our result about partitions.

%\begin{thm}
%If $p=\frac{1}{l}$ for some $l \in \N$, then $\chi_{g}^{\infty}(G_{n,p}) \geq (\frac{p}{2}+ \epsilon)n$
%\end{thm}

The difference in the even and odd cases is because when $n$ is odd, Bob moves first in the second round. Also, note that we made no efforts to optimize $o(n)$ terms. For the unresolved case when $n$ is even and $p\not\in\{\frac{1}{2},\frac{1}{3},\dots\}$, we conjecture the following.
\begin{conjecture}\label{conjecture}
$\forall p\in(0,1)\setminus \{\frac{1}{2},\frac{1}{3},\dots\},\exists \epsilon>0$ such that with high probability, $$\chi_g^{\infty}(G_{n,p}) \leq (1-\epsilon) \frac{pn}{2}.$$
\end{conjecture}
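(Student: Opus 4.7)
My strategy for proving Conjecture~\ref{conjecture} is to refine Alice's upper-bound strategy from Theorem 1.2 to take advantage of the non-integer value of $2/p$. Write $2/p = c + \delta$ with $c = \lfloor 2/p \rfloor \in \N$ and $\delta \in (0,1)$. The upper bound $(1+o(1))pn/2$ is attained by maintaining an essentially equitable colouring in which there are about $pn/2$ classes, each of average size $2/p$. When $2/p$ is an integer (that is, $p = 1/l$) this partition can be genuinely equitable and the bound is tight; for $\delta > 0$ the integer class sizes are necessarily non-uniform, and Alice should be able to exploit the resulting slack.

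Concretely, the plan is to let Alice use $k = (1-\epsilon(p))pn/2$ colours with class sizes drawn from $\{c,c+1\}$ in proportions tuned to $\delta$. Each class of size $c$ has expected neighbour count $pc < 2$ at a typical vertex and is therefore absent from that vertex's neighbourhood with probability at least $(1-p)^c > e^{-2}$, which gives strictly more slack than in the integer case. Since Alice always has the first move in every round when $n$ is even, she can steer the game toward her target profile: at each of her turns she chooses the uncoloured vertex and colour so that the class-size vector stays close to its target, and she uses any extra freedom to distribute her chosen colour evenly across the graph, mimicking the equitable-colouring heuristic that underlies Theorem 1.2.

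The main obstacle is proving that this refined strategy is robust under Bob's adversarial play. In the tight case $p = 1/l$, the matching lower bound in Theorem 1.2 comes (morally) from Bob being able to fill every colour class to exactly size $2l$; when $\delta > 0$ no such exact balancing is possible, and Alice genuinely gains. The hard part is converting this qualitative gain into a sustained multiplicative saving $\epsilon(p) > 0$ in the number of colours, and in particular showing that this saving persists over all rounds of the infinite game. I expect to need a round-by-round potential argument tracking the joint distribution of class sizes and their edge incidence with individual vertices, combined with concentration estimates showing that Alice's target profile is maintained with high probability. The constant $\epsilon(p)$ should vanish as $\delta \to 0$ or $\delta \to 1$, which matches the fact that $\{1/l : l \in \N\}$ is the precise set of exceptional values in the conjecture.
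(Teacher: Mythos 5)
This statement is a \emph{conjecture} in the paper: the authors give no proof, only a heuristic at the end of Section 5, so there is no ``paper proof'' to match, and your proposal does not close the gap either --- by your own admission the decisive step (the ``round-by-round potential argument'' with concentration estimates) is left entirely unspecified. That missing step is the whole content of the conjecture, not a technical finish.

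More importantly, the mechanism you propose points in the wrong direction. Your plan is to exploit class-size slack ($2/p\notin\N$ forcing colour classes of sizes $c$ and $c+1$) so that Alice can prevent neighbourhood saturation with only $(1-\epsilon)pn/2$ colours. But the paper's lower-bound machinery (\Cref{oddprop} and its generalization \Cref{genbob}) works for \emph{every} constant $p\in(0,1)$ and shows that Bob can force all of $(p/2-\epsilon)n$ colours into the closed neighbourhood of a single fixed vertex by the end of round one; no equitable-profile strategy of Alice can stop this, since Bob simply mirrors Alice's plays inside $N(v)$ while introducing fresh colours. Hence with $(1-\epsilon)pn/2$ colours Bob will always saturate at least one vertex, and Alice's only salvation --- the reason the conjecture is restricted to even $n$ --- is the parity dynamic: she moves first in round two and can recolour a vertex to delete one colour from the threatened neighbourhood. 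The arithmetic condition $p=1/k$ enters not through class sizes but through \Cref{1/kauxlem}: only for $p=1/k$ does the partition identity hold that lets Bob assign colours to the sets $X_I$ so that several vertices simultaneously see all colours with no colour appearing uniquely in an intersection, making his threat immune to Alice's single recolouring. For $p\notin\{\frac12,\frac13,\dots\}$ that identity fails, so at most two vertices can be saturated and some colour appears uniquely, which Alice can remove; the genuinely open question, which the authors flag explicitly and which your proposal never engages, is whether Bob can then \emph{re-introduce} the deleted colour faster than Alice can keep deleting, sustaining the threat across rounds. Your static first-moment computation (absence probability $(1-p)^c>e^{-2}$ per class) also does not survive adversarial play: Bob, not chance, decides which colours land in which neighbourhoods, and averaging class sizes $c$ and $c+1$ against exponent $2/p$ yields no net slack in any case. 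A credible attack on the conjecture must be a game-theoretic analysis of the round-two recolouring exchange, not a refinement of the equitable-colouring upper bound.
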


The structure of the paper is as follows. In Section 2, we prove the upper bound for $\chi_g^{\infty}(G_{n,p})$. In this proof, we make no distinction between odd and even values of $n$. In Section 3, we prove the corresponding lower bound for odd $n$. In Section 4, we prove a generalization of the result in Section 3, which we then use in Section 5 to get the lower bound for the case $p=1/l$ for some $l\in\mathbb{N}$. Along the way, we use various structural results about the random graph $G_{n,p}$. As the proofs of these are usually quite easy but technical, we collect all of them in Appendix A. Finally in Section 6, we provide answer to one of the questions posed in the paper of Klostermeyer and Mendoza.

Throughout the paper we will use the following conventions. We say that a result holds in $G_{n,p}$ with high probability (\emph{whp}) if the probability that it holds tends to $1$ as $n \rightarrow \infty$. The neighbourhood of a vertex $v$ in a graph, $G$, denoted $N(v)$, will be the vertices of $G$ that $v$ is connected to, as well as $v$ itself. The partition of a set $X$ will refer to a collection of disjoint non-empty subsets whose union is $X$.

%The variant of the eternal game where Bob must pick the smallest colour available at a vertex is called the \textit{greedy colouring game}, and the number of colours needed is denoted $ \chi^2_g(G)$ when it is played in one round and $ \chi^{\infty 2}_g(G)$ for the eternal game. The variant where both Alice and Bob must pick the smallest colour available at a vertex is called the \textit{very greedy colouring game}, and the number of colours needed is denoted $ \chi^3_g(G)$ when it is played in one round and $ \chi^{\infty 3}_g(G)$ for the eternal game. 

%This paper is organized as follows. First, we extend results by Klostermeyer and Mendoza about complete bipartite graphs to $n$-partite graphs. Next, we bound $ \chi^\infty_g(G)$ based on the maximal degree and codegree of $G$. Then, we examine graphs with dense bipartite subgraphs. We additionally study $ \chi^\infty_g(G)$ on the random graph $G = \mathcal{G}(n,p)$. Finally, we answer questions posed by Klostermeyer and Mendoza.

\section{Upper bound}

In this section, we show the following proposition.
\begin{prop}
For any fixed $p,\epsilon\in(0,1)$, whp $\chi_{g}^{\infty}(G_{n,p}) \leq (\frac{p}{2} + \epsilon)n $.
\end{prop}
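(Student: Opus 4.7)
The plan is for Alice to follow a mirroring strategy based on a matching in the complement of $G$. Fix $\epsilon>0$ and set $k = \lceil(p/2+\epsilon)n\rceil$. First I would collect structural inputs of the kind proved in Appendix A: whp $\overline{G}$ admits a (near-)perfect matching $M=\{\{a_i,b_i\}\}_{i=1}^{\lfloor n/2\rfloor}$ (since $\overline{G}\sim G_{n,1-p}$ has minimum degree $\Theta(n)$), every vertex has degree $pn \pm O(\sqrt{n\log n})$, and there exists a proper $k$-colouring $\phi$ of the auxiliary \emph{pair graph} $H$ (vertices are the pairs of $M$, with two pairs adjacent iff some edge of $G$ runs between them) with the extra property that for every $v\in V(G)$, the $\phi$-colours appearing on pairs that meet $N(v)$ omit at least $\Omega(n)$ of the $k$ colours. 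The last property should follow because $H$ is itself essentially a random graph with chromatic number $o(n)\ll k$, so in a typical proper $k$-colouring of $H$ each colour class has size $\Theta(1/p)$ and a positive fraction of these classes lie entirely outside $N(v)$'s pair support; this concentrates by a standard vertex-exposure martingale, and a union bound over $v$ gives the uniform statement.

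Alice's strategy maintains the invariant that at the end of each round every pair $\{a_i,b_i\}$ is monochromatic. The local rule is: whenever Bob plays a vertex $v$ with colour $c$, Alice responds on her very next turn by playing the $M$-partner $v'$ with the same colour $c$. Because $v,v'$ are non-adjacent in $G$, Bob's colour $c$ cannot conflict with $v'$ via $v$ itself; the rest of $N(v')$ fits within the pair-monochromatic invariant modulo $O(1)$ transient disruptions, so by the ``$\Omega(n)$ missing colours'' property the colour $c$ is legal at $v'$. When Alice has to initiate (her first move of a round, or when Bob plays into a pair already closed this round), she opens a fresh pair and colours its first member with the target prescribed by $\phi$.

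The crucial counting step: at any intermediate moment in the game, the number of distinct colours appearing in $N(v)$ is the number of distinct pair-colours among pairs meeting $N(v)$, plus at most a bounded number of transient colours from pairs that Bob has just opened but Alice has not yet matched. The first term is $\leq k - \Omega(n)$ by the designed property of $\phi$, and a simple counting argument bounds the transient term by $O(1)$ via Alice's immediate-response rule. Hence Alice always has $\Omega(n)$ legal colours available when she must re-colour, which in particular is $\geq 1$, so the game continues indefinitely.

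The main obstacle is building the colouring $\phi$ of $H$ with the simultaneous ``$\Omega(n)$ missing colours at every $v$'' property; once that structural fact is in hand, Alice's strategy and its analysis are mechanical. I expect this is proved in Appendix A by a random-colouring-plus-union-bound argument, exploiting the fact that $k$ is linear in $n$ while $\chi(H)$ is only $o(n)$, so there is a great deal of slack to enforce a ``spread out'' colour class distribution. The parity of $n$ is irrelevant to this upper bound: the mirroring strategy works identically for odd and even $n$, with the only adjustment being an $o(n)$ correction for any vertex left unmatched when $n$ is odd.
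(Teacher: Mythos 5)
Your strategy has a fatal gap at its central step: the copy move need not be legal, and nothing in your setup repairs this. When Bob plays colour $c$ at $v$, you want Alice to play $c$ at the matching partner $v'$. But $v$ and $v'$ are merely non-adjacent; $N(v')\setminus N(v)$ has size $\Theta(n)$, and Bob is free to choose a colour $c$ that is legal at $v$ but \emph{already present} somewhere in $N(v')\setminus N(v)$. The ``$\Omega(n)$ missing colours'' property of $\phi$ is of no help here: it says many colours are missing from $N(v')$, not that the \emph{specific} colour Bob just played is among them. In recolouring rounds the problem is worse, since Alice's response at $v'$ is forced to be a change to exactly Bob's new colour. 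Once a pair goes bichromatic, your accounting (``distinct colours in $N(v)$ $=$ distinct pair-colours $+O(1)$'') collapses, and Bob can repeat this at will.

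The second, independent gap is that $\phi$ plays no role in the actual game: Bob colours roughly half the vertices with colours of \emph{his} choosing, so the realized pair-colours are not $\phi$'s. Even if Alice could keep every pair monochromatic, Bob can defeat her as follows. Fix $v$; about $(2p-p^2)\frac{n}{2}=p\bigl(1-\frac{p}{2}\bigr)n$ pairs meet $N(v)$, which exceeds $k=(\frac{p}{2}+\epsilon)n$ for small $\epsilon$, and Bob has $n/2\geq k$ moves per round. He simply opens $k$ distinct pairs at their endpoint inside $N(v)$, each with a fresh colour (always legal); Alice's copy rule obligingly closes each pair without consuming any new colour or any other pair. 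At the end of the round all $k$ colours appear in $N(v)$, and Bob wins by selecting $v$ at the start of the next round. This is precisely why the paper's Alice does not try to control \emph{which} colours Bob plays: instead she (a) restricts her own palette to $\frac{\epsilon}{100}n$ ``small'' colours, (b) matches Bob's \emph{move counts} inside every neighbourhood by mirroring his vertex choices with respect to the small set of dangerous vertices (an adjacency-pattern mirroring, not a global matching in $\overline{G}$), and (c) uses an emergency rule to immediately colour any vertex missing fewer than $\beta n$ colours. Then the colours in any $N(v)$ are bounded by Bob's $\approx\frac{p}{2}n$ moves there plus Alice's $\frac{\epsilon}{100}n$ colours, which stays safely below $k$. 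Your approach would need an entirely different mechanism to cap the number of distinct colours Bob injects into a single neighbourhood; the matching-plus-$\phi$ scaffolding does not provide one.
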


To prove this, we formulate a deterministic strategy for Alice and prove that whp this strategy enables her to prevent Bob from winning when the game is played with $ (\frac{p}{2} + \epsilon)n $ colours. 

The biggest danger facing Alice is that at the end of some round, Bob would manage to introduce all the colours in the neighbourhood of at least one vertex. He could then win by choosing one of those vertices at the beginning of the next round. Thus, her strategy should be to ensure that at any point of each round, she has coloured roughly as many vertices in the neighbourhood of any single vertex as Bob has, and she should use few colours on them. If, at some point during a round, a vertex has many colours in its neighbourhood compared to other vertices, Alice might be forced to colour it so Bob cannot win by choosing it later that same round. Fortunately for Alice, the number of times she is forced to colour a vertex with many different coloured neighbours is so few that she can still follow her strategy.

Consider the following four properties of a graph $G_{n,p}$.

%First we state some results that hold whp about the structure of a typical random graph $G_{n,p}$. These results will be used in the proof of Lemma \ref{nth}, which is used to show the proposition. The proofs of the following results can be found in Section \ref{structure}. 
\begin{enumerate}[label=(\roman*)]
 \item\label{vertexcondn}
Every vertex of $G_{n,p}$ has degree at most $(p+\frac{\epsilon}{100})n $

\item\label{fewdangers}
 There exists a constant $K=K(\epsilon,p)$ such that $G_{n,p}$ does not contain sets $A,B,S\subset V(G)$, with $|A\cap B|=0$, $|A|=|B|\geq\frac{ \epsilon}{200} n, |S|=K$, such that every $v \in S$ is connected to at least $\frac{\epsilon}{200} n$ more vertices in $B$ than in $A$.
\item\label{fewcolours}
There exist constants $\beta=\beta(\epsilon,p)$, $\frac{\epsilon}{100}>\beta>0$ and $C=C(\epsilon,p)$ such that the following holds: for any colouring of $G_{n,p}$ by $(\frac{p}{2} + \epsilon) n$ colours, the number of vertices that have all but at most $2\beta n$ colours in their neighbourhood is at most $C \log n$.
 \item\label{othercolours}
 There exist constants $\gamma=\gamma(\epsilon,p)>0$ and $D=D(\epsilon,p)$ such that, in any colouring of $G_{n,p}$ by $(\frac{p}{2} + \epsilon) n$ colours, the number of vertices that have all but at most $\gamma n$ of the colours $1,2,...,\frac{\epsilon}{200}n$  in their neighbourhood is at most $D\log n$.
 \end{enumerate} 
 
We prove in \Cref{structure} that each of these holds whp in $G_{n,p}$. For the remainder of this paragraph, we will assume \ref{vertexcondn} through \ref{othercolours} hold for the graph $G_{n,p}$. Note that as we are assuming finitely many properties, each of which holds whp, then whp all of them hold simultaneously.

For a particular round of the game, let $A_i$ and $B_i$ denote the sets of vertices played by Alice and Bob respectively in the first $i$ moves of that round. We shall define the vertices that threaten Alice's chance of winning as dangerous. 

\begin{definition}
For a fixed round of play, let $D_i$ denote the set of \emph{dangerous vertices at $i$ moves}, denoted $D_i$. A vertex $v$ belongs to $D_i$ if for some number of moves $j\leq i$, Bob has played at least $\frac{\epsilon}{100}n$ times more in the neighbourhood of $v$ than Alice has, i.e. $|N(v)\cap B_j|\geq |N(v)\cap A_j|+\frac{\epsilon}{100}n$.
%We define dangerous vertices as follows. At the start of each round, no vertex is considered dangerous. Vertex $v$ is declared to be dangerous at such point in the round where Bob coloured at least $\frac{\epsilon}{100} n$ more neighbours of this vertex in the current round than Alice did. Once vertex is declared dangerous, it stays so until the end of the round.
\end{definition}
We additionally define vertices that Alice can colour to maintain some symmetry in the game as follows.
\begin{definition}
Let $S$ be a finite subset of vertices of a graph, $G$. For a vertex $v\not\in S$, we say that a vertex $w$ \emph{mirrors $v$ with respect to $S$} if $w\not\in S$ and for any $t \in S$, $G$ contains an edge $vt$ if and only if it contains an edge $wt$.
\end{definition}

Let $C= \{1,2,...,(\frac{p}{2}+ \epsilon)n\}$ be the set of colours used in the game. We call a colour \emph{large} if it is at least $\frac{\epsilon}{200} n$, and \emph{small} otherwise.

Alice will use the following strategy at the $i^{th}$ move of a round: from the list below, she chooses the first point that applies, and colours the corresponding vertex with smallest colour available to that vertex. If there are multiple vertices for the same point on the list, she chooses one of these arbitrarily.
\begin{enumerate}
\item If there is a vertex $v$ that misses less than $ \beta n $ colours in its neighborhood and such that $v$ has not yet been coloured in the current round, she chooses $v$.
\item If Bob played a vertex $w$ for his previous move, $w$ is not dangerous, and there is a vertex $v$ which mirrors $w$ with respect to $D_i$, she chooses $v$.
\item She chooses an arbitrary vertex.
\end{enumerate}
%If no vertices satisfy the previous conditions, she chooses any arbitrary vertex.
We shall prove that because \ref{vertexcondn}, \ref{fewdangers}, \ref{fewcolours}, and \ref{othercolours} hold, selecting vertices in order of priority will ensure that Bob can never win the eternal vertex colouring game for sufficiently large $n$. To show Bob cannot win, we prove the following lemma. 

% \begin{lemma}\label{first}
% If Alice follows this strategy in the first round, she can ensure all of the following:

% $\bullet$ There is no vertex $v$ such that number of times Bob played in neighbourhood of $v$ is more than by $\frac{\epsilon}{50} n$ bigger than number of times Alice played in neighbourhood of $v$

% $\bullet$ Alice used no more than $\frac{\epsilon}{100} n$ colours
% \end{lemma}
%Note that in particular, these two statements also imply Bob could not have won during the first round, since number of colours used in neighbourhood of any vertex is at most $\frac{\epsilon}{100}n$ by Alice, and $(\frac{p}{2} + \frac{\epsilon}{100} + \frac{\epsilon}{200})n $ by Bob (his maximal total number of moves there), and summing up these nubers of Bob and Alice adds up to less than the total number of colours in the game.

\begin{lemma}\label{nth}
For $n$ sufficiently large, at the beginning of the $k^{th}$ round of play for $k\geq 2$, the following two conditions hold:

$\bullet$ During the $(k-1)^{st}$ round, there was no vertex $v$ such that number of times Bob played in neighbourhood of $v$ was more than $\frac{\epsilon}{50} n$ greater than number of times Alice played in neighbourhood of $v$.

$\bullet$ Alice used no more than $\frac{\epsilon}{100} n$ colours in $(k-1)^{st}$ round.

Then, by playing according to the above described strategy, Alice ensures the following:

$\bullet$ Bob does not win during $k^{th}$ round

$\bullet$ During $k^{th}$ round, there is no vertex $v$ such that number of times Bob played in neighbourhood of $v$ is more than  $\frac{\epsilon}{50} n$ greater than number of times Alice played in neighbourhood of $v$.

$\bullet$ Alice uses no more than $\frac{\epsilon}{100} n$ colours in the $k^{th}$ round.
\end{lemma}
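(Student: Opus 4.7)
The plan is to prove all three conclusions of the lemma by an inductive analysis of round $k$, move by move, maintaining several coupled invariants. The central invariant I would carry is that $|D_i|\leq K-1$ throughout the round, where $K=K(\epsilon,p)$ is the constant from property~\ref{fewdangers}. Once this bound is in force, step~2 of Alice's strategy is essentially always executable: a mirror of a vertex $w$ with respect to a set of size at most $K-1$ is characterised by its adjacency pattern to that set, and a standard $G_{n,p}$ concentration argument (of the kind the paper defers to Appendix~A) shows that each such pattern class has $\Omega(n)$ representatives, so uncoloured mirrors remain plentiful.

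The heart of the argument is a gap-preservation claim. Suppose $v$ enters $D_{t_v}$ at some move $t_v$, so that $|N(v)\cap B_{t_v}|-|N(v)\cap A_{t_v}|\geq \frac{\epsilon}{100}n$. For any later pair of moves at which Alice applies step~2, the mirror $w'$ of Bob's move $w$ with respect to $D_j\supseteq\{v\}$ satisfies $v\sim w \iff v\sim w'$, so the pair has net effect $0$ on this gap. The only moves that can disturb the gap for $v$ after $t_v$ are Alice's step~1 and step~3 moves together with Bob's moves on dangerous vertices (where step~2 does not apply). As a side invariant I would show that each of these move types occurs at most $O(\log n)$ times per round: step~3 is used at most $O(K)$ times (since step~2 fails only when Bob plays a dangerous vertex or the first move of the round), and step~1 at most $O(\log n)$ times via property~\ref{fewcolours}, which bounds the size of the nearly-saturated pool in every intermediate colouring by $C\log n$. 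Consequently the gap for $v$ stays within $\frac{\epsilon}{100}n\pm O(\log n)$ of its value at $t_v$, hence is at least $\frac{\epsilon}{200}n$ for $n$ large. If $|D_i|$ were to reach $K$, taking $S=D_i$ with $A=A_i$ and $B=B_i$ (balanced by at most one vertex) produces a configuration in which each $v\in S$ has gap at least $\frac{\epsilon}{200}n$ to $A$ and $B$, forcing $|A|,|B|\geq \frac{\epsilon}{200}n$; this contradicts property~\ref{fewdangers}, establishing $|D_i|\leq K-1$.

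For the remaining conclusions of the lemma, I would use the inductive hypotheses on round $k-1$ as follows. For any $v$, the Alice-played neighbours contribute at most $\frac{\epsilon}{100}n$ distinct colours to $N(v)$ at the end of round $k-1$, while the Bob-played neighbours number at most $(p/2+3\epsilon/200)n$ (by the gap bound from round $k-1$ together with property~\ref{vertexcondn}). Thus in the starting colouring $\chi_0$ of round $k$ every vertex misses at least roughly $\frac{39\epsilon}{40}n$ colours in its neighbourhood, so in particular no vertex is ``dead''. This buffer, together with step~1 draining the nearly-saturated pool, ensures that no uncoloured vertex is ever dead at a Bob turn, so Bob cannot win. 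Alice's total colour count is bounded by $\frac{\epsilon}{200}n+D\log n\leq \frac{\epsilon}{100}n$ via property~\ref{othercolours}, since her smallest-available-colour rule selects from $\{1,\ldots,\frac{\epsilon}{200}n\}$ for all but $D\log n$ of her moves. The gap bound $\frac{\epsilon}{50}n$ follows directly from gap-preservation. The main obstacle is the tight coupling between the step~1/step~3 count and the gap bound: a naive accounting is circular, and I would resolve this by a careful inductive ordering showing that any violation of the step-count invariant can only occur simultaneously with a violation of the $|D_i|\leq K-1$ bound, hence with a configuration forbidden by property~\ref{fewdangers}.
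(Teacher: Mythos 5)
Your overall architecture tracks the paper's proof closely: the same four structural properties, the initial buffer computation (down to the constant $\frac{39\epsilon}{40}n$), the bound on step-1 moves via property \ref{fewcolours}, the mirroring argument for gap preservation, the bound on dangerous vertices via property \ref{fewdangers}, and the colour count $\frac{\epsilon}{200}n + D\log n$ via property \ref{othercolours}. The inductive ordering you propose to break the apparent circularity between the step-count invariant and the dangerous-set bound is, if anything, more explicit than the paper, which asserts the bound of $K$ dangerous vertices rather tersely.

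However, there is one genuine flaw: your claim that step 2 is ``essentially always executable,'' justified by the observation that each adjacency-pattern class with respect to $D_i$ has $\Omega(n)$ representatives, and your consequent assertion that step 2 fails \emph{only} when Bob plays a dangerous vertex (or at the first move), giving $O(K)$ step-3 moves. The concentration fact is true but irrelevant: what Alice needs is an \emph{uncoloured} mirror, and since every vertex is coloured exactly once per round, every pattern class is eventually exhausted of uncoloured vertices as the round progresses. Late in the round Bob can play the last uncoloured vertex of a class, leaving no legal mirror even though the class has $\Omega(n)$ members in the graph. The paper handles exactly this failure mode by counting it: partitioning $V\setminus D$ into at most $2^{|D|}\leq 2^K$ classes, each class can be exhausted at most once, so Alice plays at most $K+2^K$ arbitrary moves rather than your $O(K)$. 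The repair costs nothing (both bounds are $O(1)$, which is all the gap-preservation argument needs), but as written your accounting of step-3 moves does not hold. A secondary, minor elision: property \ref{fewcolours} bounds the nearly-saturated pool \emph{pointwise} in each colouring by $C\log n$; to bound the number of \emph{distinct} vertices ever triggering step 1 over the whole round you need the paper's checkpoint discretization (examining the colourings at times $\beta n, 2\beta n,\dots,\lfloor\beta^{-1}\rfloor\beta n$, yielding $C\beta^{-1}\log n$), since a naive union over all $n$ intermediate times gives only $nC\log n$. Your phrase ``every intermediate colouring'' gestures at this but does not supply the discretization that makes it work.
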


Note that Lemma \ref{nth} implies that, if Alice plays according to the strategy described above,  Bob can never win the eternal graph colouring game. 

\begin{proof}
The first step is to establish that at beginning of the round, each vertex misses more than $2\beta(\epsilon,p)n$ colours in its neighbourhood, so that there is no immediate threat to Alice. In the first round, this is immediate, as no colour is used yet. When $k\geq2$,  Alice uses at most $\frac{\epsilon}{100} n$ colours in the neighbourhood of any vertex $v$. Bob played at most $\frac{\epsilon}{50} n$ more moves in the  neighbourhood of $v$ than Alice did, so by property \ref{vertexcondn}, Bob played at most $(\frac{p}{2}+\frac{\epsilon}{200}+\frac{\epsilon}{100})n$ colours in the neighborhood of $v$. Hence, at least $\frac{39\epsilon}{40}n>2\beta(\epsilon,p) n$ colours are missing from the neighbourhood of  $v$.

Now, if some vertex misses at most $\beta n$ colours at any point during the round, then in particular at least one of the times $\beta n, 2 \beta n,...,\lfloor \beta^{-1}\rfloor \beta n$, this vertex missed at most $2 \beta n$ colours. By property \ref{fewcolours}, we conclude there are at most $C \beta^{-1} \log n = o(n)$ vertices that, at some point in this round, have missed at most $\beta n$ colours. Recall that colouring vertices that miss at most $\beta n$ colours is of the highest priority in Alice' strategy. If Bob were to create a vertex seeing all colours that was not yet played in this round, Alice must have spend the previous $\beta n$ moves playing in other vertices missing at most $\beta n$ colours in their neighbourhoods. However, this contradicts the fact that there were at most  $C \beta^{-1} \log n <\beta n$ such vertices. Hence, Alice can colour all such vertices in time.

Next, note that Alice uses $o(n)$ (and in fact only constantly many) moves that are arbitrary.  If Alice colours an arbitrary vertex, then either Bob played a dangerous vertex for his previous move or she cannot mirror Bob on the current set of dangerous vertices. By property \ref{fewdangers}, there are at most $K$ vertices declared dangerous during the round, so Bob can play in a dangerous vertex no more than $K$ times. On the other hand, consider if Bob did not play dangerous vertex and Alice cannot mirror his move on $D$, the set of dangerous vertices. If we partition the rest of the graph into $2^{|D|}\leq 2^{K}$ classes according to which vertices of $D$ they are connected to, Bob must have just played last vertex from one of these classes. Hence, Alice must have played at most $K+2^K = O(1)$ arbitrary moves in any particular round.  

Following this strategy, Alice also ensures that Bob will play in the neighbourhood of any vertex at most $\frac{\epsilon}{50} n$ more than Alice does. Indeed, once Bob has played $\frac{\epsilon}{100} n$ more colours in the neighbourhood of any vertex, $v$, it is declared dangerous. She then plays in neighbourhood of $v$ whenever Bob does, except $o(n)$ times when she plays a move of type 1 or an arbitrary vertex.

Finally, note that Alice uses large colours only if the vertex she wants to colour is connected to all the small colours. If at any point during the round a vertex is connected to all the small colours, then at least one of the times $\gamma n, 2 \gamma n,..,\lfloor \gamma^{-1}\rfloor \gamma n$, this vertex must have been missing at most $\gamma n$ colours. By property \ref{othercolours}, there could have only been at most $D \gamma^{-1} \log n = o(n)$ vertices that were connected to all small colours at some point in this round. Hence, as she can colour all other vertices with small colours, Alice uses at most $\frac{\epsilon}{200}n + D \gamma^{-1} \log n < \frac{\epsilon}{100} n$ colours during the round.
\end{proof}

\section{Lower bound for odd n}

In this section, we prove the lower bound for the eternal game chromatic number on a graph with an odd number of vertices.
\begin{prop}\label{oddprop}
For any $p,\epsilon\in(0,1)$ fixed, whp $\chi_{g}^{\infty}(G_{2m+1,p}) \geq (\frac{p}{2} - \epsilon)(2m+1) $. 
\end{prop}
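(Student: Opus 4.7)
The plan is to design a winning strategy for Bob with at most $k:=\lceil(\tfrac{p}{2}-\epsilon)(2m+1)\rceil$ colours, exploiting the fact that for $n=2m+1$ odd, Bob plays first in every even round and hence makes $m+1$ moves against Alice's $m$ in round~$2$. Bob's global goal is to produce a vertex $v$ such that at the moment $v$ must be recoloured, the current colouring of $N(v)\setminus\{v\}$ contains all $k-1$ colours distinct from $v$'s own colour; at that point $v$ is stuck and Bob wins by choosing it on his next turn.

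In round~$1$ I would have Bob commit to a target $v$ (or rather to a pool $v_1,\dots,v_L$ of pairwise distance-$\geq 2$ candidates of typical degree), and on each of his $m$ turns play inside $N(v)\setminus\{v\}$, colouring with a colour not yet appearing in $N(v)$. Letting $|S_t|$ denote the number of distinct colours currently present in $N(v)\setminus\{v\}$, each such Bob move increases $|S_t|$ by one, provided a fresh colour legal at the chosen $u$ exists; a routine random-graph estimate (in the spirit of \Cref{structure}) shows that whp this is the case so long as $|S_t|<k$, because the total number of colours forbidden to $u$ (those already in $N(u)$ or $N(v)$) stays below $k$ until $|S_t|$ is essentially at its ceiling. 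Alice's moves in $N(v)$ never increase $|S_t|$ (she reuses existing colours to minimise $|S|$). This round-$1$ strategy achieves $|S_0|\geq k-o(n)$ at the start of round~$2$; Bob also influences $v$'s round-$1$ colour so that it lies outside $S_0$.

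In round~$2$ Bob never recolours $v$. On each of his turns he selects a vertex $u\in N(v)\setminus\{v\}$ whose current colour is duplicated in $N(v)\setminus\{v\}$ and recolours $u$ with a colour absent from $N(v)$; this gives a strict $+1$ move. On each of her turns Alice can at best effect a $-1$, and only by recolouring a unique-colour vertex of $N(v)$ with an existing colour. A pairing argument (each Alice turn paired with the Bob turn preceding it) then shows that the net change of $|S|$ across round~$2$ is at least $+1$, so $|S_{\mathrm{end}}|\geq k$, making $v$ stuck. Moreover, whenever Alice plays outside $N(v)$ she contributes $0$ rather than $-1$, so Bob benefits further; whenever Alice recolours the current target $v=v_i$ itself, Bob switches to $v_{i+1}$, and since Alice has only $m$ moves against $L=\omega(1)$ candidates, at least one target survives.

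The main obstacle I expect is showing that Bob can execute his $+1$ move on every single one of his round-$2$ turns: this requires simultaneously a colour of $N(v)$ which is duplicated (so $|S|< \deg(v)/2$, which holds since $k<\deg(v)/2$ by the degree bound and our choice of $v$) and a colour absent from $N(v)$ (so $|S|<k$). Both conditions must be maintained against Alice's attempts to concentrate colour multiplicities in round~$1$ so as to suppress $|S_0|$, and to convert non-unique colours into unique ones in round~$2$ so as to create $-1$ opportunities for herself. The structural ingredient needed is a whp statement about $G_{n,p}$: that for every typical vertex $v$ and every proper partial $k$-colouring, the number of unique-colour vertices in $N(v)\setminus\{v\}$ is $o(n)$, so Alice's supply of $-1$ moves is genuinely bounded. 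Combining this concentration estimate with the parity-driven pairing produces the gap $|S_{\mathrm{end}}|-|S_0|\geq 1-o(1)$ that finishes off the stuck vertex $v$ in round~$2$; iterating into later rounds if necessary only helps Bob, since the conditions needed on $|S|$ are preserved.
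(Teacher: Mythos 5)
Your proposal correctly identifies the decisive parity fact (Bob moves first in round~2 when $n$ is odd) and the correct target (flood a neighbourhood with all colours), but the step you label ``a routine random-graph estimate'' is precisely where the entire difficulty of this proposition lives, and it is not routine. The obstruction to Bob's round-1 plan is not whether a fresh colour is legal at a single chosen $u$: it is that Alice can place a colour $c\notin S_t$ on a small set of vertices outside $N(v)$ whose neighbourhoods jointly cover every still-uncoloured vertex of $N(v)$; once this happens, $c$ is \emph{permanently} excluded from $N(v)$ no matter which $u$ Bob picks, and each such blocked colour is an irrecoverable deficit. Essentially all of the paper's proof of this proposition is devoted to defeating this: it defines \emph{double blocks} and pairs ``$\alpha$ away from becoming a double block,'' proves structurally (property \ref{fewblocks}, proved in \Cref{structure}) that whp at most $K=O(1)$ disjoint pairs can come within $\delta n$ of blocking while at least $\frac{\epsilon}{100}n$ vertices of $N(v)$ remain uncoloured, and equips Bob with a priority list — immediately copy into $N(v)$ any colour appearing twice outside it, chase singleton outside colours in chronological order, and preemptively ``kill'' any pair approaching a double block — followed by a careful endgame analysis at the time $T$ when $10K-1$ colours are still missing. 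Your Bob, who always just plays a fresh colour into $N(v)$ and never defends, has no answer to this, and your claimed outcome $|S_0|\geq k-o(n)$ is unsupported; with the paper's machinery one in fact gets \emph{all} $k$ colours into $N(v)$ by the end of round~1.

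Your round-2 repair is both unnecessary and broken. Unnecessary: since Bob moves first in round~2, if round~1 ends with every colour in the closed neighbourhood of $v$, he picks $v$ on the very first move of round~2 and wins before Alice can remove anything — that is the paper's punchline, and it is why the proof never needs any $+1/-1$ combat. Broken: a net gain of $+1$ over one round cannot close a $k-o(n)$ deficit (your own conclusion $|S_{\mathrm{end}}|\geq k$ requires $|S_0|\geq k-1$, which round~1 does not deliver); your pairing accounting ignores that every vertex of $N(v)$ is recoloured exactly once per round, so Bob's supply of $+1$ moves and the freezing of $|S|$ once $N(v)$ is exhausted are uncontrolled against an Alice who schedules her $-1$ moves late; the structural ingredient you invoke — that in every proper partial $k$-colouring the number of uniquely-coloured vertices in $N(v)$ is $o(n)$ — is simply false, since any injective colouring is proper (indeed your own Bob creates $\Theta(n)$ unique colours in $N(v)$ in round~1); and the pool of $L=\omega(1)$ candidate targets is infeasible, because filling a single neighbourhood already consumes about $k=\Theta(m)$ of Bob's $m$ round-1 moves, so he can maintain only $O(1)$ targets. (For even $n$, where a round-2 fight genuinely arises, the paper needs the much heavier multi-set machinery of \Cref{genbob} and the case $p=1/k$ — evidence that this part cannot be dispatched by a pairing heuristic.)
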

For convenience, we shall let $n=2m+1$. Fix any vertex $v$ of the graph $G_{n,p}$.

We will show that whp, Bob can ensure that in the first round, all $(\frac{p}{2} - \epsilon)n$ colours are in the closed neighbourhood of $v$ in $G_{n,p}$. Bob then wins in the first move of the second round, by choosing $v$.

Bob can introduce all colours in $N(v)$ by playing in $N(v)$ whenever Alice does, thus ensuring he plays in at least roughly half of the vertices in $N(v)$ while introducing a new colour every time. Some set of vertices $X$ outside $N(v)$ might at some point be adjacent to all unplayed vertices of $N(v)$. If Alice were to play some colour not appearing in $N(v)$ in all these vertices, this colour could no longer be introduced to $N(v)$. Fortunately for Bob, the number of such sets will be very limited, and thus Bob can take care of them in time.

%We will use three structural results for random graphs in the proof of Proposition \ref{oddprop}, the proofs of these are collected in \Cref{structure}.
Consider following two properties of a random graph.

\begin{enumerate}[label=(\roman*)]

\item\label{nodegreesmall}
Whp, every vertex of $G_{n,p}$ has degree at least $(p-\frac{\epsilon}{100})n$.

\item\label{fewblocks}
For all $\epsilon>0$, and $p\in(0,1)$, there exist positive constants $\delta=\delta(\epsilon,p), K=K(\epsilon,p)$, such that in $G_{n,p}$
\begin{itemize}
\item Whp, there exist no 3 vertices $u,v,w$ such that the number of vertices in the neighbourhood of $u$, but not in the neighbourhood of $v$ or $w$ is at most $\delta n$
\item Whp, for any set $S$ of size $\frac{\epsilon}{100} n$ in the graph, there exist at most $K$ mutually disjoint pairs of vertices $\{a_{i},b_{i}\}$ such that at most $\delta n$ vertices of $S$ are not in $N(a_i)\cup N(b_i)$
\end{itemize}
\end{enumerate} 

Henceforth, we assume our graph has both properties, and fix $\delta=\delta(\epsilon,p)$ and $K=K(\epsilon,p)$. In \Cref{structure}, we show that indeed whp $G_{n,p}$ has these properties.

%\rh{following paragraph is awkwardly worded. I'll think about how to rephrase it}
%The main impediment Bob faces in achieving his aim of putting all colours in $N(v)$, is the following. Suppose that at some stage, there is a colour not appearing in $N(v)$ such that all vertices not yet played in $N(v)$ are adjacent to a vertex of this colour. It is clear that this colour will not appear in $N(v)$ at the end of this round.

Note that if at some stage there exists a colour $c$ that does not appear in $N(v)$ and all vertices not yet played in $N(v)$ are adjacent to a vertex of colour $c$, then $c$ will not appear in $N(v)$, which is contrary to Bob's goal.

We introduce the ideas of a \emph{double block} and being \emph{$\alpha$ away from becoming a double block} in order to describe a strategy Bob should take to achieve his goal of filling the neighbourhood of a vertex with several colours.
\begin{definition}
A pair of vertices $a$ and $b$ is called a \emph{double block} if at some stage in the round, all uncoloured vertices in $N(v)$ are in the neighbourhood of either $a$ or $b$ and neither $a$ or $b$ (if coloured) is coloured with a colour appearing in $N(v)$.
\end{definition}

\begin{definition}
A pair of vertices $a$ and $b$ is said to be \emph{$\alpha$} away from becoming a double block, if all but at most $\alpha$ of the uncoloured vertices in $N(v)$ are in the neighbourhood of either $a$ or $b$ and neither $a$ or $b$ (if coloured) is coloured with a colour appearing in $N(v)$.  
\end{definition}

Bob will play according to the following strategy. From the list below, he picks the highest point that applies.
\begin{enumerate}
\item If there exists a colour that appears at least twice outside of $N(v)$ but not in $N(v)$, then Bob plays it in $N(v)$ if it is a valid colouring.

\item If at least $10K$ colours appear nowhere in the graph and there are at least $\frac{\epsilon}{100} n$ uncoloured vertices in the neighbourhood of $v$, then Bob does the blocking moves in the chronological order they were called for, if legally possible. Blocking moves are called for if a pair $\{a,b\}$ of unplayed vertices is $\delta n$ away from becoming a double block. %Blocking moves consist of ensuring that both $a$ and $b$ are coloured by a colour appearing in $N(v)$. \rh{what is a blocking move? is it colouring a certain vertex a colour not appearing in nbhd?}
Blocking moves consist of the following steps. First colour vertex $a$ a colour not yet appearing in the graph, say $c_a$. Second, unless Alice plays in vertex $b$ or introduces $c_a$ in $N(v)$, play $c_a$ in $N(v)$ and repeat for vertex $b$. If Alice plays in vertex $b$ with a colour $c_b$ not appearing in $N(v)$, play $c_b$ in $N(v)$, and finish by playing $c_a$ in $N(v)$ on the next move.

%Blocking moves for this pair consist of colouring vertex $a$ with some new colour, then playing this new colour in  $N(v)$ (unless Alice played this colour in $N(v)$ in between), then colouring vertex $b$ with a different new colour, then use this colour in $N(v)$ (again unless Alice played this colour in $N(v)$ in between). 

\item If legally possible, Bob introduces colours appearing once outside of $N(v)$ into $N(v)$, in the order in which they were introduced to the game. 

\item If legally possible, Bob introduces new colours to $N(v)$.
\item Otherwise Bob does anything.
\end{enumerate}

Note that (2) might involve up to four moves for any pair close to becoming a double block. If in between these four moves a situation as in (1) arises, situation (1) takes priority.

\begin{claim}\label{few2moves}
There are no more than $4K$ moves of type (2) used in the first round.
\end{claim}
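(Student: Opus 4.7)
The plan is to show that Bob initiates the blocking procedure on at most $K$ distinct pairs during the first round and that each blocking consumes at most four Bob-moves, giving at most $4K$ moves of type (2) in total. The per-pair budget follows directly from inspecting strategy (2): the procedure on $\{a,b\}$ consists of at most colouring $a$ with a fresh colour $c_a$, introducing $c_a$ into $N(v)$, colouring $b$ with a fresh colour $c_b$, and introducing $c_b$ into $N(v)$. The branches where Alice plays in $b$ only shorten this; the essential point is that once the procedure is complete, both $a$ and $b$ have been coloured.

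Next, I would argue that the pairs on which Bob ever initiates a blocking are pairwise vertex-disjoint. This follows from the observation that Bob starts a blocking on $\{a,b\}$ only while both $a$ and $b$ are still unplayed, whereas for every pair on which Bob has previously finished the blocking procedure, both vertices have already been coloured. Hence any newly initiated pair is disjoint from each previously initiated one.

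The main step, which I expect to be the only one requiring care, is ruling out more than $K$ initiated pairs via property \ref{fewblocks}. Suppose for contradiction that $P_1,\dots,P_{K+1}$ are initiated in that chronological order during the first round, and let $t^*$ be the moment at which Bob starts the blocking on $P_{K+1}$. The hypothesis activating strategy (2) guarantees that the set $S$ of uncoloured vertices of $N(v)$ at time $t^*$ has size at least $\tfrac{\epsilon}{100}n$. Each pair $P_i=\{a_i,b_i\}$ was triggered at some earlier time $t_i\le t^*$, at which at most $\delta n$ of the then-uncoloured vertices of $N(v)$ lay outside $N(a_i)\cup N(b_i)$. Since the uncoloured part of $N(v)$ only shrinks as the round progresses, $S$ is contained in the corresponding set at time $t_i$, so at most $\delta n$ vertices of $S$ still lie outside $N(a_i)\cup N(b_i)$. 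Restricting to any subset of $S$ of size exactly $\tfrac{\epsilon}{100}n$ yields $K+1$ mutually disjoint pairs witnessing the failure of property \ref{fewblocks}, a contradiction. Hence at most $K$ pairs are ever blocked, and therefore at most $4K$ moves of type (2) are played in the first round. The only slightly delicate bookkeeping is this monotonicity of the ``$\delta n$ away'' condition under shrinking the uncoloured set together with the disjointness of initiated pairs; everything else is immediate from the definitions.
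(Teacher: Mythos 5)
Your proof is correct and takes essentially the same route as the paper: apply property \ref{fewblocks} to the set of uncoloured vertices of $N(v)$ at the moment the last blocking is called for (which has size at least $\frac{\epsilon}{100}n$ by the trigger condition for type (2) moves), with at most four Bob-moves charged per pair. The paper's proof is a two-line version of this; your explicit verification that initiated pairs are mutually disjoint and that the ``at most $\delta n$ outside $N(a_i)\cup N(b_i)$'' condition is preserved as the uncoloured set shrinks simply fills in bookkeeping the paper leaves implicit.
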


\begin{proof}
Let $U$ denote the set of vertices that are uncoloured in $N(v)$ when the last blocking moves were played. By the  definition of type (2) moves, $|U| \geq \frac{\epsilon}{100} n$, so property \ref{fewblocks} gives the result.
\end{proof}

%We will also use the following definition: for our fixed vertex $v$ and some fixed time during the first round, call the pair of vertices $(a,b)$ a \emph{double-block}\pvh{I suggest pulling this definition out and using it consistently as we already referenced very similar notions just before} if every still uncoloured vertex in $N(v)$ is connected to $a$ or $b$. Say that $(a,b)$ is $\alpha$ far from becoming a double-block if we have only $\alpha$ uncoloured vertices in $N(v)$ not connected to $a$ or $b$.

Let $T$ be the first move after which precisely $10K-1$ colours are missing in $G_{n,p}$ during the first round. We collect the following observations about $T$:

$\bullet$ $T$ exists and at $T$, at least $\epsilon n$ vertices in $N(v)$ are uncoloured

We shall show that $T\leq (p - 2\epsilon) n$. After Bob's first $(\frac{p}{2} - \epsilon) n$ moves, Alice has also played  $(\frac{p}{2} - \epsilon) n$ moves. As $|N(v)| \geq (p-\frac{\epsilon}{100}) n$, at this stage at least $\frac{199}{100} \epsilon n$ vertices in $N(v)$ are uncoloured. Bob spent at most $4K$ moves playing according to (2), and when he did not, he always introduced a new colour in $N(v)$, if he legally could. Note that if there were still colours missing from $G_{n,p}$ and there were uncoloured vertices in $N(v)$, moves of type (4) were always legal. Hence, unless all colours appear in the graph, Bob played at least $(\frac{p}{2}-\epsilon)n-4K$ colours in $N(v)$ and hence in $G_{n,p}$.

$\bullet$ At $T$, at most $18K$ colours are missing in $N(v)$

Between two consecutive moves of Bob before T, the number of colours appearing outside of $N(v)$ but not in $N(v)$ can increase by at most 2. In fact it only increases if Bob makes a move of type (2). Hence, there are at most $8K$ such colours at time $T$, and result follows.

$\bullet$ At $T$, Bob has played at most $8K$ (1) moves.

Let $c$ be the number of colours appearing outside $N(v)$ and not in $N(v)$. Note that between two consecutive moves of Bob up to time $T$, $c$ increases only if Bob plays a (2) move, in which case it increases by at most 2. On the other hand, note that Bob only plays (1) moves directly after Alice plays a colour already appearing outside $N(v)$. Hence, $c$ decreases whenever Bob plays a (1) move. By \ref{few2moves}, there were at most $4K$ (2) moves, so there were at most $8K$ (1) moves.

$\bullet$ No pair of vertices is closer than $\frac{\delta}{2} n$ to becoming a double-block at any point up to $T$

We know from \ref{fewblocks} that at the beginning of the game, no pair is closer than $\delta n$ to becoming a double block. 
Up to time $T$, whenever a pair gets closer than $\delta n$ to becoming a double block, no (3)-(5) moves are played until this pair is eliminated. However, there are at most $12K$ (1) and (2) moves played until $T$. Hence, no pair can be closer than $\delta n-12K\geq \frac{\delta}{2}n$ to becoming a double-block up to $T$.

$\bullet$ Every colour that does not appear in $N(v)$ at $T$ appears at most once in $G_{n,p}$

Note that the only time before $T$ that there is a colour appearing twice outside $N(v)$, but not inside, is directly after Alice has played this colour. In response, Bob immediately plays that same colour in $N(v)$, which is possible as no pair of vertices is a double block. Hence, if Bob made the last move before $T$, the statement follows. If the last move before $T$ was by Alice, she must have introduced a new colour into the graph by the definition of $T$, which again implies the statement.

Next we claim:

\begin{claim}
In the $18K$ moves of Bob following $T$, he will introduce all colours in $N(v)$.
\end{claim}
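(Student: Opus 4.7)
The plan is to show that each of Bob's $18K$ moves following $T$ strictly decreases the number of colours missing from $N(v)$ by one, whenever that number is positive. Since at time $T$ there are at most $18K$ missing colours, after $18K$ such moves the missing count is zero, i.e., every colour of the game appears in $N(v)$.

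The first observation is that a colour placed in $N(v)$ persists for the rest of the round, so the missing-colour count is monotone non-increasing during the round. In particular, Alice's moves never increase it: playing outside $N(v)$ leaves the count unchanged (regardless of what colour she uses), and playing inside $N(v)$ either keeps it the same (non-missing colour) or reduces it (missing colour). The second observation is that priority (2) is never triggered after $T$: the number of colours appearing nowhere in $G_{n,p}$ is monotone non-increasing and is already at most $10K-1<10K$ at time $T$. Hence each post-$T$ Bob move falls under (1), (3), (4), or (5), and (1), (3), (4) each reduce the missing count by exactly one.

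It remains to check that, whenever a colour is missing from $N(v)$, at least one of (1), (3), (4) is legally playable. Pick a missing colour $c$ and consider its number of appearances outside $N(v)$. If $c$ appears at least twice, at $a$ and $b$ say, priority (1) requires an uncoloured vertex in $N(v)\setminus(N(a)\cup N(b))$; since no pair was $\tfrac{\delta}{2}n$-close to a double block at $T$, and at most $O(K)$ of the pertinent vertices can have been coloured since, such a vertex exists. If $c$ appears exactly once, at some $w$, priority (3) requires an uncoloured vertex in $N(v)\setminus N(w)$; by the first bullet of property \ref{fewblocks}, $|N(v)\setminus N(w)|>\delta n$, which dwarfs the $O(K)$ vertices there that could have been coloured. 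If $c$ appears nowhere in $G_{n,p}$, priority (4) only needs an uncoloured vertex in $N(v)$, and at least $\epsilon n - O(K)$ remain. In every case Bob is able to resolve a missing colour, so the missing count reaches $0$ within $18K$ Bob moves, completing the proof.

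The main subtlety in the plan is not really an obstacle so much as bookkeeping: one must confirm that the structural slack built up before $T$, namely the $\tfrac{\delta}{2}n$ margin in the no-close-double-block condition, the $>\delta n$ guarantee from property \ref{fewblocks}, and the $\geq \epsilon n$ stock of uncoloured vertices in $N(v)$, survives the additional $O(K)$ moves that take place in the window under consideration. This is immediate because $K$ is a constant while $n$ is taken large.
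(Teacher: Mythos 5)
Your proposal is correct and takes essentially the same route as the paper: type (2) moves cease after $T$ by the definition of $T$, no double block can be completed within the $36K$-move window because every pair was at least $\tfrac{\delta}{2}n$ away at $T$, and the $\epsilon n$ stock of uncoloured vertices in $N(v)$ dwarfs the window, so each of Bob's moves legally introduces a missing colour via (1), (3) or (4). Your writeup just makes explicit the monotonicity of the missing-colour count and the case analysis on how often a missing colour appears outside $N(v)$, which the paper leaves implicit (note only that the "appears at least twice" case never involves three or more occurrences, since Bob's immediate (1) response keeps any missing colour at two outside occurrences for at most one move).
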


\begin{proof}
Moves of type (2) are no longer played after $T$ by their definition. In the next $36K$ moves, $18K$ of which are made by Bob and the $18K$ by Alice, no complete double-block can be created, as all are at least $\frac{\delta}{2} n>36K$ moves away. Since at $T$ at least $\epsilon n$ vertices of $N(v)$ are still uncoloured, during the next $36K<\epsilon n$ moves, there are ample uncoloured vertices in $N(v)$. Hence, Bob can and will introduce a new colour to $N(v)$ every move until all colours appear there, as he ceases the (2) moves.
%So in his every move Bob introduces new colour in $N(v)$ if there are still uncoloured vertices there (note that this colour need not necessarily be new to the graph as a whole), and we had no more than $18K $ colours missing before. Also obviously there still are uncoloured vertices there for $n$ large enough, as at $T$ at least $\epsilon n$ vertices in $N(v)$ were uncoloured.
\end{proof}

Thus we see that Bob will ensure that all colours appear in $N(v)$ during the first round and he will win in the first move of the second round by picking $v$.

\section{Generalization of the lower bound for odd n}
\Cref{oddprop} doesn't trivially extend to even $n$, as it is not enough for Bob to let all colours appear in the neighbourhood of a fixed vertex because Alice could use her first move in the second round to remove one of the colours from this neighbourhood. 

If Bob can manage to play all colours in the neighbourhood of two vertices, with no colour appearing uniquely in the intersection of the neighbourhoods, then Alice can not. This limits how Bob can colour the intersection of two neighbourhoods of vertices. By increasing the number of vertices that simultaneously see all colours, the size of this intersection can be reduced. Our aim is to show that if $p=1/k$ for some $k\in\mathbb{N}$, then for any fixed $l$, Bob can choose $l$ vertices and play all of $(p/2-p^l/2-\epsilon)n$ colours in the neighbourhoods of these vertices. The $p^l/2n$ correction term comes from the intersection of the neighbourhoods of these $l$ vertices. As $l$ can be taken arbitrarily large, this shows $\chi_g^\infty(G_n,p)$ for even $n$ and $p=1/k$.

In this section, we prove a generalization of \Cref{oddprop}, showing that if $V(G)$ is partitioned into constantly many parts and each of the parts is assigned a set of colours of size roughly half the size of the part, Bob can guarantee all these colours to appear in the parts by the end of the first round. This generalizes the notion that Bob could achieve this in the single set $N(v)$. In the next section, we will fix some set of vertices $X$ of constant size and induce partition $\{A_I:I\subset [l]\}$, where $A_I=\{v\in V: N(v)\cap X=I\}$. We show in \Cref{1/klem}, that for the special case $p=\frac{1}{k}$, there exists an appropriate way of assigning colours to the $A_I$'s such that each vertex in $X$ will see all colours after the first round.

\begin{prop}\label{genbob}
$\forall \epsilon,\eta, \gamma>0, l\in\mathbb{N},$ and $p\in(0,1)$, if $X_i\subset V$  $(i\in[l])$ are disjointly chosen independent sets of vertices of the graph $G_{n,p}$ with $|X_i|\geq \gamma n$ and $Y_i\subset [(p/2-\epsilon)n]$ with $|Y_i| \leq \frac{(1-\eta) |X_i|}{2}$, then whp Bob can guarantee that at the the end of round all of the colours in $Y_i$ appear in $X_i$.
\end{prop}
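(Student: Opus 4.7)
The plan is to lift Bob's strategy from \Cref{oddprop} to the multi-set setting, treating each $X_i$ in the role played there by $N(v)$. Since $\sum_i |Y_i| \leq \frac{1-\eta}{2}\sum_i |X_i| \leq \frac{(1-\eta)n}{2}$ and Bob makes roughly $n/2$ moves per round, there is $\Omega(n)$ slack to absorb the $O(1)$ moves needed to neutralise block-threats. That each $X_i$ is independent guarantees that once Bob identifies a legal colour for any of its uncoloured vertices, the placement itself cannot be obstructed by an in-set edge.

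First I would isolate the structural properties of $G_{n,p}$ used in the argument, formulated uniformly over admissible choices of $(X_1,\dots,X_l)$: whp, every vertex has degree in $(p\pm o(1))n$ and has $\Omega(n)$ non-neighbours in each sufficiently large $X_i$; and there exist constants $\delta,K>0$ such that for every $l$-tuple of disjoint independent sets with $|X_i|\geq\gamma n$ and every $i\in[l]$, no triple $u,v,w$ satisfies $|N(u)\cap X_i\setminus(N(v)\cup N(w))|\leq \delta n$, and for every $S\subseteq X_i$ with $|S|\geq\frac{\eta}{4}|X_i|$ there are at most $K$ disjoint pairs $\{a,b\}$ with $|S\setminus(N(a)\cup N(b))|\leq\delta n$. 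Each is a Chernoff-plus-union-bound estimate — the $\binom{n}{\gamma n}^l$ combinatorial factor for the choice of $X_i$'s is dominated by the exponentially small probability of deviation at any fixed configuration — and I would defer the details to Appendix A.

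With these properties in hand, Bob follows the natural priority strategy: at each turn he executes the highest applicable rule. (1) If some $c\in Y_i$ currently appears at least twice in $V\setminus X_i$ but not in $X_i$, legally play $c$ into $X_i$. (2) If some unplayed pair $\{a,b\}$ is within $\delta n$ of forming a double-block for an $X_i$ that still contains at least $\frac{\eta}{2}|X_i|$ uncoloured vertices, execute the four-step blocking manoeuvre of Section 3 on $\{a,b\}$ using a fresh colour of $Y_i$. (3) Introduce into some $X_i$ a colour of $Y_i$ that is presently unique outside $X_i$, ordered by first appearance. (4) Introduce a new colour of $Y_i$ into some $X_i$. (5) Do anything. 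The per-set analysis of \Cref{oddprop} then carries over verbatim to each $X_i$: rule (2) is invoked at most $4K$ times per $X_i$, hence at most $4lK=O(1)$ times in total; rule (1) similarly adds $O(1)$ per $X_i$; every doubly-appearing or nearly-blocked colour is intercepted before an actual double-block can form; and the slack $\frac{\eta}{2}|X_i|$ combined with the non-neighbour abundance property guarantees that legal uncoloured targets in $X_i$ outside any designated pair's joint neighbourhood exist throughout the round.

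The main obstacle will be the cross-set coordination: several $X_i$ may simultaneously demand Bob's attention, and one must show that servicing the priority queue respects both the global move budget $\lfloor n/2\rfloor$ and the per-set capacity. The budget check is $\sum_i|Y_i|+O(lK)\leq\frac{(1-\eta)n}{2}+O(1)<\lfloor n/2\rfloor$ for large $n$; the capacity check follows from $|Y_i|\leq\frac{1-\eta}{2}|X_i|$ together with Alice's at most $|X_i|/2$ colourings inside any single $X_i$, leaving $\Omega(|X_i|)$ uncoloured vertices available to Bob whenever he needs to introduce a colour there. These two checks together imply Bob successfully ensures every $c\in Y_i$ appears in $X_i$ by the end of the round.
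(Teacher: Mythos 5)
Your proposal has a genuine gap: it implicitly treats the colour sets $Y_i$ as if they were disjoint, servicing each $X_i$ with the pair-based machinery of \Cref{oddprop} (double-blocks, the ``appears twice outside $X_i$ but not in it'' trigger, the four-step blocking manoeuvre). But the proposition places no disjointness hypothesis on the $Y_i$, and in the intended application (\Cref{bigproposition} via \Cref{1/klem}) the sets $Y_I=f(I)$ overlap heavily: a single colour must be introduced into up to $l$ different sets $X_i$. This breaks the per-set analysis in two ways. First, the invariant from Section 3 that ``every colour missing from $X_i$ appears at most once outside'' cannot be maintained: when Alice plays a colour $c\in Y_{i_1}\cap\dots\cap Y_{i_r}$, Bob needs $r$ moves to copy it into all its designated sets, and while he does so Alice gets $r$ further moves, so the multiplicity of not-yet-secured colours grows with $l$ rather than staying at $2$. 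Consequently the relevant obstruction is no longer a \emph{pair} of vertices covering the uncoloured part of some $X_i$ but an $m$-set of vertices all carrying the same endangered colour, with $m$ of order $C_l l$; your structural properties (triples $u,v,w$, and at most $K$ disjoint \emph{pairs}) say nothing about such larger blocks, which is exactly why the paper proves \Cref{generalfewblocks} for general $m$ and runs the blocking argument with $100C_l l$-blocks. Second, your claim that ``the per-set analysis of \Cref{oddprop} carries over verbatim'' fails because Bob cannot respond immediately to each threat when several sets demand attention simultaneously; the paper has to introduce the graded urgency thresholds (a colour appearing $C_l q$ times while missing from more than $l-q$ of its designated sets), the potential function $D=\sum_j \max\{r_j-(q_j-1)C_l,0\}$ to bound the number of forced type-(1) moves, and the per-set ``end stage'' times $T_i$ to decouple the finishing phases of different sets. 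None of these has an analogue in your strategy, and your closing ``budget and capacity checks'' do not substitute for them: the difficulty is not the total move count (your computation $\sum_i|Y_i|\leq \frac{(1-\eta)n}{2}$ is fine) but the threat bookkeeping under overlap.

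Your uniform structural lemma (union bound over the $\binom{n}{\gamma n}^l$ choices of the $X_i$'s beaten by exponential concentration) is sound as far as it goes, and for genuinely disjoint $Y_i$'s your plan would likely go through with the Section 3 accounting adapted to a queue of at most one new doubled colour per Alice move. To repair the proof as stated, you would need to: (a) allow a colour to be designated for several sets and track, for each colour $j$, the pair $(r_j,q_j)$ of its multiplicity and the number of designated sets it has already reached; (b) replace double-blocks by $m$-blocks for $m$ comparable to the maximal legal multiplicity of an unsecured colour, invoking \Cref{generalfewblocks}; and (c) add an end-stage rule so that sets with only $O(K)$ missing colours are finished by copying Alice's moves, ensuring blocking moves are only demanded while at least $\frac{\eta}{4}n$ vertices of the relevant $X_i$ remain uncoloured.
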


To prove \Cref{genbob} we will use the following generalization of the structural result in \Cref{fewblocks}.

\begin{lemma}\label{generalfewblocks}
For all $m\in\mathbb{N}$, $\alpha>0$ and $p\in(0,1)$, there exist positive constants $\delta=\delta(\alpha,p,m), K=K(\alpha,p,m)$, such that
\begin{itemize}
\item For any set $S$ of size at least $\alpha n$ in the graph, whp there exist no $m$-sets of vertices $\{a_{1},\dots a_{m}\}$ such that at most $\delta n$ vertices of $S$ are not in $\bigcup_{j}N(a_{j})$
%\item Whp, there exist no $m$ vertices $v_1,\dots, v_m$ and a $j\in [m]$ such that $|\bigcap_{i\leq j} N(v_i)\cap \bigcap_{i>j} \overline{N(v_i)}|\leq \delta n$
%\item Whp, there exist no 3 vertices $u,v,w$ such that the number of vertices in the neighbourhood of $u$, but not in the neighbourhood of $v$ or $w$ is at most $\delta n$ \vd{edit this part to whatever precisely we need here}
\item Whp, for any set $S$ of size at least $\alpha n$ in the graph, there exist at most $K$ mutually disjoint $m$-sets of vertices $\{a_{i,1},a_{i,2},\dots a_{i,m}\}$ such that at most $\delta n$ vertices of $S$ are not in $\bigcup_{j}N(a_{i,j})$
\end{itemize}
\end{lemma}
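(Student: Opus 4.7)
The plan is to reduce both bullets to standard Chernoff concentration for binomial variables arising from the independent edges of $G_{n,p}$, followed by a polynomial-sized union bound. The key structural observation is that if the $m$-sets $A_1,\dots,A_{K+1}$ are pairwise disjoint, then for any vertex $v\notin\bigcup_i A_i$ the events $\{v\in N(A_i)\}_i$ depend on pairwise disjoint sets of potential edges (the edges from $v$ to $A_i$) and are therefore mutually independent; this is what will make it possible to control the size of $\bigcap_i N(A_i)$.

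For the first bullet, I would fix $S$ with $|S|\geq \alpha n$ and consider an arbitrary $m$-set $A$. For each $s\in S\setminus A$ the event ``$s\notin N(A)$'' has probability $(1-p)^m$ and depends only on the $m$ edges joining $s$ to $A$, so $|S\setminus N(A)|$ is $\mathrm{Bin}(|S\setminus A|,(1-p)^m)$ with mean at least $(\alpha n-m)(1-p)^m$. Taking $\delta<\tfrac{1}{2}\alpha(1-p)^m$, a Chernoff bound gives $\PP[|S\setminus N(A)|\leq\delta n]\leq e^{-cn}$ for some $c=c(\alpha,p,m,\delta)>0$, and a union bound over the $\binom{n}{m}\leq n^m$ possibilities for $A$ finishes the first bullet.

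For the second bullet I would prove the following stronger graph-only statement: if $K$ is chosen large enough that $(1-(1-p)^m)^{K+1}<\alpha/4$ (possible since $(1-p)^m<1$) and then $\delta$ so that $(K+1)\delta<\alpha/4$, then whp for every $(K+1)$-tuple of mutually disjoint $m$-sets $A_1,\dots,A_{K+1}$,
\[
\Bigl|\bigcap_{i=1}^{K+1} N(A_i)\Bigr| < (\alpha-(K+1)\delta)\, n.
\]
This implies the bullet immediately: from $S\setminus\bigcap_i N(A_i)=\bigcup_i\bigl(S\setminus N(A_i)\bigr)$, having $K+1$ disjoint $m$-sets each leaving at most $\delta n$ of some $S$ with $|S|\ge\alpha n$ uncovered would force $|\bigcap_i N(A_i)|\geq|S|-(K+1)\delta n\geq(\alpha-(K+1)\delta)n$, contradicting the display. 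To prove the displayed whp bound, I fix disjoint $A_1,\dots,A_{K+1}$ and use the structural observation above: the number of vertices $v\notin\bigcup_i A_i$ lying in $\bigcap_i N(A_i)$ is $\mathrm{Bin}\bigl(n-(K+1)m,\,(1-(1-p)^m)^{K+1}\bigr)$, with mean below $\alpha n/4$. A Chernoff bound shows this is at most $(\alpha-(K+1)\delta)n-(K+1)m$ except with probability $e^{-\Omega(n)}$, and a union bound over the $\leq n^{(K+1)m}$ tuples of disjoint $m$-sets closes it.

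The main obstacle is just the choice of constants in the correct order: $K$ must be picked first, depending only on $\alpha,p,m$, so that the expected size of a $(K+1)$-fold intersection of random neighborhoods falls strictly below $\alpha n$; only then can $\delta$ be taken small enough that the gap between $(\alpha-(K+1)\delta)n$ and this mean remains linear in $n$, giving Chernoff enough slack to beat the polynomial union bound. Disjointness of the $A_i$ is essential: without it the events $\{v\in N(A_i)\}$ would be correlated and one could not separate $|\bigcap_i N(A_i)|$ from $n$.
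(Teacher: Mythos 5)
Your proposal is correct, and while your first bullet coincides with the paper's treatment (Hoeffding applied to $|S\setminus N(A)|\sim \mathrm{Bin}(|S\setminus A|,(1-p)^m)$ plus a union bound over the $\binom{n}{m}$ choices of $A$, for a fixed $S$ --- which is the only tenable reading of that bullet, since a ``for all $S$ simultaneously'' version is false: one could take $S\subset \bigcup_j N(a_j)$, and in the application the $X_i$ are indeed fixed before the relevant edges are sampled), your second bullet takes a genuinely different route. The paper (its final appendix lemma) keeps $S$ inside the probabilistic event: it passes to a subset $S'\subset S$ of size $\gamma n/2$ disjoint from the $T_i$'s, uses the mutual disjointness of the $T_i$'s to make the $K$ coverage events independent, obtaining per-configuration failure probability $\exp\left(-(1-p)^{2m}\gamma Kn/4\right)$, and then must take $K>4/\left(\gamma(1-p)^{2m}\right)$ precisely so that this exponent beats the $\binom{n}{\gamma n/2}\leq 2^n$ entropy cost of union-bounding over $S'$. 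You instead eliminate $S$ deterministically: $K+1$ disjoint near-covers of $S$ force $\bigl|\bigcap_i N(A_i)\bigr|\geq(\alpha-(K+1)\delta)n$ via the identity $S\setminus\bigcap_i N(A_i)=\bigcup_i\bigl(S\setminus N(A_i)\bigr)$, so your bad event mentions only the tuple $(A_1,\dots,A_{K+1})$ and a polynomial union bound over at most $n^{(K+1)m}$ tuples suffices. The same disjointness-independence appears in both proofs, but you deploy it per outside vertex across the $K+1$ coordinates, making $\bigl|\bigcap_i N(A_i)\setminus\bigcup_i A_i\bigr|$ binomial with success probability $(1-(1-p)^m)^{K+1}$; your crude $(K+1)m$ correction for the closed-neighbourhood convention is absorbed into the linear Chernoff slack and is fine. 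The trade-off: the paper's argument is a single one-shot union bound, at the price of $K=\Theta\bigl(1/(\alpha(1-p)^{2m})\bigr)$ dictated by the entropy-versus-exponent fight; yours needs only $(1-(1-p)^m)^{K+1}<\alpha/4$, so $K$ is logarithmic in $1/\alpha$, avoids any exponential union bound, and yields as a by-product a clean uniform statement --- $(K+1)$-fold intersections of neighbourhoods of disjoint $m$-sets are whp of size less than $(\alpha-(K+1)\delta)n$ --- that is of independent interest. Either proof serves the application in \Cref{genbob} equally well, since only the existence of some constants $\delta,K$ depending on $(\alpha,p,m)$ is used.
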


In order to prove Propositon \ref{genbob}, we will define the concepts of an \emph{end stage}, \emph{m-block} and \emph{$\alpha$ away from becoming an m-block}.

\begin{definition}
Let $T_i$ be the first move after which at most $10K$ of the colours in $Y_i$ are missing from $X_i$, if this exists. After $T_i$, say $X_i$ is in its \emph{end stage}.
\end{definition}

\begin{definition}
Given disjoint sets $X_i\subset V$, at some stage of the round we say a set $\{a_1,\dots,a_{m}\}$ is an \emph{$m$-block} if for some $i$, $X_i$ is not in its end stage, every uncoloured vertex in $X_i$ is in the neighbourhood of some $a_j$, and no $a_j$ is coloured in some colour also appearing in $X_i$.
\end{definition}

\begin{definition}
Given disjoint sets $X_i\subset V$, at some stage of the round we say a set $\{a_1,\dots,a_{m}\}$ is \emph{$\alpha$ away from becoming a $m$-block} if for some $i$, $X_i$ is not in its end stage, all but $\alpha$ of the uncoloured vertices in $X_i$ is in the neighbourhood of some $a_j$, and no $a_j$ is coloured in some colour also appearing in $X_i$.
\end{definition}

\begin{proof}[Proof of \Cref{genbob}]
Let $C_l= \frac{12l}{\eta \gamma}+4$ and let $\delta=\delta(\eta/4,p,100C_ll)$ and $K=K(\eta/4,p,100C_ll)$ as in  \Cref{generalfewblocks}. Bob will play the move of the highest priority that he legally can according to the following list: 
\begin{enumerate}
\item If for some $q\in[l]$, some colour $c$ appears $C_lq$ times in the graph, but it is missing from more than $l-q$ of the $X_i$'s for which $c\in Y_i$, Bob plays it in any of $X_i$'s where it does not yet appear.

\item If for some $i$, $X_i$ is in its end stage, Bob plays the missing colours into it, copying the colour Alice played if it was missing.

\item If there is $100C_l l$ block closer than $\delta n$ moves away from becoming an m-block and at least $\eta/4 n$ vertices in the corresponding $X_i$ are uncoloured, Bob kills it. By killing it, we mean the following sequence of moves. Colour the first vertex of our $100C_l l$-set by some colour that appears less than $C_l q$ times in the graph and is missing from at most $l-q$ of the relevant $X_i$'s. Then make sure in the next moves that this colour also appears in all of its designated $X_i$'s. Repeat this procedure for all vertices of our $100C_l l$-set.

\item If for some $q\in[l]$, some colour appears $C_lq$ times in the graph, but it is still missing from more than $l-q+1$ of $X_i$'s, Bob plays it in any of $X_i$'s where it does not yet appear.

\item Bob plays any colour in $Y_i$ not yet used in  $X_i$ to that $X_i$, if possible in the same $X_i$ as Alice played in the previous move.

\item Bob plays anything anywhere.
\end{enumerate}

Note that (3) might involve up to $100C_ll(l+1)$ moves. If in between these moves a situation as in (1) or (2) arises, those are resolved first.

\begin{claim}\label{fewbigmoves}
Let $C=100C_ll^2(l+1)$. There were no more than $C K$ (3) moves  called for.
\end{claim}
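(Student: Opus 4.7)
The plan is a two-stage counting argument: first I would bound the total number of killings (calls of a type (3) procedure) across all $X_i$ by $lK$, and then multiply by the worst-case cost of a single killing, which is at most $100C_l l(l+1)$ of Bob's moves.

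To obtain the per-$X_i$ bound, I would fix $i\in[l]$ and observe that once a $100C_l l$-set $M$ has been killed for $X_i$, every vertex of $M$ has been assigned a colour appearing in $X_i$, so by the defining condition of an $m$-block, $M$ can never again participate in an $m$-block for $X_i$. Consequently, the $m$-sets $M_1, M_2, \ldots, M_s$ killed for $X_i$ during the round, listed in chronological order, are mutually vertex-disjoint.

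To convert this disjointness into a cardinality bound I would apply the second bullet of \Cref{generalfewblocks} (with $m = 100C_l l$ and $\alpha = \eta/4$) to a single well-chosen witness set. Let $U_j\subseteq X_i$ denote the uncoloured portion of $X_i$ at the moment $M_j$ is killed. By the premise of a type (3) move, $|U_j|\geq (\eta/4)n$ and $|U_j \setminus \bigcup_{v\in M_j} N(v)|\leq \delta n$. Since the uncoloured portion of $X_i$ is monotonically decreasing in time, $U_s\subseteq U_j$ for every $j\leq s$, and therefore $|U_s \setminus \bigcup_{v\in M_j} N(v)|\leq \delta n$ for every $j$. As $|U_s|\geq \alpha n$, \Cref{generalfewblocks} produces at most $K$ pairwise disjoint $m$-sets near-covering $U_s$, so $s\leq K$. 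Summing over $i$ gives at most $lK$ killings in total.

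Finally I would verify by direct counting that a single killing consumes at most $100C_l l(l+1)$ of Bob's moves: for each of the $100C_l l$ vertices of the block, one move colours that vertex with the chosen colour $c$, after which at most $l$ further moves suffice to introduce $c$ into those designated $X_j$'s where it is still missing. Multiplying yields $lK\cdot 100C_l l(l+1) = 100C_l l^2(l+1) K = CK$, as claimed. The only subtle point I expect is the monotonicity argument that produces a common witness $U_s$ so that \Cref{generalfewblocks} can be invoked uniformly for all $M_1,\ldots, M_s$ in a given $X_i$; once this is in place the remainder of the argument is straightforward bookkeeping.
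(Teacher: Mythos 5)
Your argument is correct and is essentially the paper's own proof: the paper likewise takes $U$ to be the uncoloured portion of $X_i$ at the moment the last blocking moves were called for (which has size at least $\frac{\eta}{4}n$ by the premise of a type (3) move), invokes the second bullet of \Cref{generalfewblocks} to cap the kills per $X_i$ at $K$, and multiplies by the per-kill cost $100C_ll(l+1)$ and the number $l$ of sets to get $CK$. Your write-up actually supplies the two details the paper leaves implicit --- the monotonicity of the uncoloured sets giving a common witness, and the pairwise disjointness of the killed blocks --- with the one caveat that the killing procedure only guarantees each chosen colour appears in that colour's \emph{designated} sets $X_j$ (those with $c\in Y_j$), so your assertion that every vertex of a killed block receives a colour appearing in the threatened $X_i$ is slightly stronger than what the procedure literally provides; this is the same implicit assumption underlying the paper's terse proof.
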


\begin{proof}
Let $U\subset X_i$ denote the set of vertices that are still uncoloured in $X_i$ when the last blocking moves were called for, for this $X_i$. \Cref{generalfewblocks} says $X_i$ called for at most $100C_ll(l+1)K$ (3) moves. Hence, in total at most $100C_ll^2(l+1)K=CK$ (3) moves are called for.
\end{proof}

\begin{claim}\label{fewimportant}
There were no more than $\frac{2l}{C_l}n$ moves of types (1),(2),(3) and (4) during the first round of the game.
\end{claim}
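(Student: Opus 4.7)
The plan is to bound each of the four move types separately, showing that the dominant contribution comes from types (1) and (4). The key observation is that any colour triggering a move of type (1) or (4) must occur at least $C_l$ times in the graph at some point during the round, so at most $n/C_l$ distinct colours can ever be involved.

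For type (3), \Cref{fewbigmoves} already gives a bound of $CK = O(1)$. For type (2), note that once $X_i$ enters its end stage there are, by definition of the end stage, at most $10K$ colours of $Y_i$ missing from $X_i$; since we are in the first round, no vertex is recoloured and the set of colours present in $X_i$ only grows, so the missing set can only shrink. Each type (2) move Bob makes in $X_i$ strictly removes a colour from this missing set, so he plays at most $10K$ moves of type (2) per $X_i$, yielding at most $10Kl = O(1)$ type (2) moves overall.

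For types (1) and (4) together, the essential point is that every such move plays a colour $c$ in some $X_i$ in which $c$ did not previously appear (this is explicit in both clauses of Bob's strategy: he plays it ``in any of $X_i$'s where it does not yet appear''). Hence for any fixed colour $c$, the total number of moves of types (1) and (4) using $c$ is at most $l$: after $l$ such moves $c$ appears in every $X_i$ and neither (1) nor (4) can fire for $c$ again. On the other hand, both clauses require some $q \in [l]$ with $m_c \geq C_l q \geq C_l$ at the moment of firing, so any colour $c$ contributing to a (1) or (4) move ultimately appears at least $C_l$ times in the graph. Since the total number of coloured vertices during the first round is $n$, we have
\[
|\{c : m_c \geq C_l \text{ at some point in the round}\}| \leq \frac{n}{C_l},
\]
and so the total number of type (1) or (4) moves is at most $l \cdot \tfrac{n}{C_l}$.

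Combining the three bounds yields a total of at most $\frac{l}{C_l} n + O(1)$, which is at most $\frac{2l}{C_l}n$ for $n$ sufficiently large (using $l \geq 1$). The main subtlety I anticipate is verifying that types (1) and (4) really can be counted together per colour even though their triggering conditions are different; this is immediate once one notes that both clauses place $c$ in an $X_i$ where it was absent, so each firing strictly enlarges the set of $X_i$'s containing $c$, regardless of whether it was triggered as (1) or (4).
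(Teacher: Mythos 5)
Your proof is correct and follows essentially the same route as the paper: bound the (3) moves by $CK$ via \Cref{fewbigmoves}, the (2) moves by $10K$ per $X_i$, and count (1)/(4) moves by noting that at most $n/C_l$ colours ever appear $C_l$ times and each such colour prompts at most $l$ of these moves. Your extra justification that each (1)/(4) firing strictly enlarges the set of $X_i$'s containing the colour is a welcome elaboration of the paper's terser ``these colours prompt a (1) or (4) move at most $l$ times,'' but it is the same argument.
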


\begin{proof}
Note that at most $\frac{n}{C_l}$ colours appear at least $C_l$ times in the graph. Moreover, these colours prompt a (1) or (4) move at most $l$ times. Finally, there are at most $10Kl$ (2) moves. Hence, there are at most $\frac{l}{C_l}n+10Kl+CK\leq \frac{2l}{C_l}n$, given $n\geq \frac{C_lK}{l}(10l+C)$.
\end{proof}

%We will also use the following definition: for our fixed vertex $v$ and some fixed time during the first round, call the pair of vertices $(a,b)$ a \emph{double-block}\pvh{I suggest pulling this definition out and using it consistently as we already referenced very similar notions just before} if every still uncoloured vertex in $N(v)$ is connected to $a$ or $b$. Say that $(a,b)$ is $\alpha$ far from becoming a double-block if we have only $\alpha$ uncoloured vertices in $N(v)$ not connected to $a$ or $b$.

 We collect the following observations about $T_i$:

$\bullet$ $T_i$ exists and, at $T_i$, at least $\frac{\eta}{4} n$ vertices in $X_i$ are still uncoloured

At the end of round one there were $|X_i|$ moves in $X_i$. Moreover, by \Cref{fewimportant} there were at most $\frac{2l}{C_l}n$ (1)-(4) moves.  
After Bob's first $|Y_i|+\frac{2 l }{C_l} n$ in $X_i$, he has played at most $\frac{2l}{C_l}n$ (1)-(4) moves. He also played in $X_i$ after every move of Alice in that set, except the times when he played (1)-(4) moves. Thus, at least $\eta |X_i|-\frac{6l}{C_l}n\geq \frac{\eta \gamma}{2} n$ of the vertices in $X_i$ are uncoloured. As $C_l\geq \frac{12l}{\eta \gamma}$, this gives the result.

%As $|N(v)| \geq (p-\frac{\epsilon}{100}) n$, we find that at this stage at least $\frac{199}{100} \epsilon n$ vertices in $N(v)$ are uncoloured. We know Bob spent at most $4K$ moves playing according to (3). When he didn't, he always introduced a new colour in $N(v)$, if he legally could. Note that if there are still colours missing from the graph and there are uncoloured vertices in $N(v)$,  (5) was always legally possible. Hence, unless all colours appear in the graph, Bob played at least $(\frac{p}{2}-\epsilon)n-4K$ colours to $N(v)$ and hence to the graph.

%$\bullet$ As we already know from \Cref{fewimportant}, at $T_i$, Bob has played at most $\frac{2l}{C_l}n$ (1) moves.
$\bullet$ Let $C'=C+10l$. At $T_i$, Bob has played at most $C'K$ (1) moves. %(don't also forget about end stage now!!!).

For a colour $j$, let $q_j$ be the number such that colour $j$ is missing from $l-q_j$ of its designated sets. Let $r_j$ be the number of times $j$ appears in the graph. If $r_j-q_jC_l>0$, then Bob is forced to play a (1) move. If $r_j-(q_j-1)C_l>0$, then this induces a (4) move. Let $D=\sum_j \max\{r_j-(q_j-1)C_l,0\}$. Note that if $D>0$, then Bob must play a (1),(2),(3) or (4) move. If $D$ increases between consecutive moves of Bob, he must have played a (2) or (3) move. Moreover, $D$ increases by at most 2 in that case. On the other hand, if Bob is prompted to play a (1) move, $D$ decreases by at least $C_l-1>2$. Hence, there are at most as many (1) moves as there are (2) and (3) moves, i.e. at most $CK+10Kl$ (1) moves.

%By \Cref{fewbigmoves}, Bob played at most $CK$ (3) moves. Also note that by its definition, Bob could have played no more than $10Kl$ (2) moves. Finally, let $D$ be the number of colours that appear at least $C_l q$ times in the graph, but are still missing from more than $l-q$ relevant $X_i$'s. Note that between consecutive moves of Bob, $D$ only increases when Bob plays type (2) or (3) moves, and it increases by  

%Let $c$ be the number of colours appearing outside $N(v)$ but not in $N(v)$. Note that between two consecutive moves of Bob up to time $T$, $c$ increases only if Bob plays a (3) move, in which case it increases by at most 2. On the other hand, note that Bob only plays (1) moves directly after Alice plays a colour already appearing outside $N(v)$. Hence, $c$ decreases whenever Bob plays a (1) move. By \ref{few2moves}, there were at most $4K$ (3) moves, so there were at most $8K$ (1) moves.

$\bullet$ No pair of vertices is closer than $ \frac{\delta}{2}n$ to becoming a $100C_ll$ block at any point up to $T_i$

By \Cref{generalfewblocks}, at the beginning of the game no $100C_ll$-set of vertices is closer than $\delta n$ to becoming a $100C_ll$ block. 
Whenever, up to time $T_i$, a $100C_ll$-set gets closer than $\delta n$ to becoming a $100C_ll$ block, no (4)-(6) moves are played until this pair is eliminated. However, there are at most $(2C+10l)K$ (1) and (3) moves played until $T_i$. Hence, no $100C_ll$-set gets closer than $\delta n-(2C+10l)K\geq \frac{\delta}{2}n$ to becoming a $100C_ll$ block up to $T_i$.

$\bullet$ Every designated colour that does not appear in $X_i$ at $T_i$ appears at most $C_l l+2$ times in our graph

By the definition of (1) moves, some colour $c$ can never appear $C_l l+2$ times in our graph, yet not appear in some of $X_i$'s with $c\in Y_i$.
%Note that the only moment up to $T$ there is a colour appearing twice outside $N(v)$, but not inside, is directly after Alice has played this. Indeed, Bob immediately answers such a move, by playing that colour in $N(v)$ (note that this is possible as no pair of vertices is a double block). Hence, if the last move before $T$, was by Bob the statement follows immediately. If the last move before $T$ was by Alice, she must have introduced a new colour into the graph by the definition of $T$, which again implies the statement.

Next we claim:

\begin{claim}
In the $10Kl+C'K$ moves of Bob following $T_i$, he will introduce all colours in $X_i$.
\end{claim}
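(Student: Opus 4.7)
The plan is to show that in the window of $10Kl + C'K$ Bob-moves following $T_i$, Bob can play a type (2) move for each of the at most $10K$ colours of $Y_i$ still missing from $X_i$ at $T_i$, and that each such move is legal. By the definition of $T_i$, exactly this many colours need to be introduced, so it suffices to control what else might consume Bob's moves and to verify legality.

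First I would account for the competing higher-priority moves. For type (1) moves, I would reuse the potential-function argument from the pre-$T_i$ analysis: setting $D = \sum_j \max\{r_j - (q_j - 1) C_l,\, 0\}$, each (2) or (3) move increases $D$ by at most $2$, while each (1) move decreases $D$ by at least $C_l - 1 > 2$. Hence the total count of (1) moves throughout the entire round is bounded by $C'K$, a cap that includes everything after $T_i$. For type (3) moves, I would invoke the observation that at $T_i$ no $100C_l l$-set is closer than $\frac{\delta}{2}n$ to being an m-block; since the window has length $O(1)$, Alice cannot close that gap, so no (3) move is ever triggered. The other $X_j$'s entering their end stages require at most $10K$ type (2) moves each, contributing at most $10K(l-1)$ extra (2) moves; together with the $10K$ needed for $X_i$, these fit inside the $10Kl$ budget.

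Next I would verify legality of each (2) move for $X_i$. For a missing colour $c$ I need (a) an uncoloured vertex in $X_i$, and (b) such a vertex not adjacent to any existing $c$-coloured vertex. Part (a) follows because at $T_i$ at least $\frac{\eta}{4}n$ vertices in $X_i$ are uncoloured and only $O(1)$ further moves land in $X_i$, leaving $\Omega(n)$ uncoloured vertices throughout. For (b), observe that at $T_i$ the colour $c$ has appeared at most $C_l l + 2$ times, and only $O(1)$ additional instances can arise in the short window; hence the set of $c$-coloured vertices has size far below $100 C_l l$. Applying \Cref{generalfewblocks} with $S$ equal to the remaining uncoloured portion of $X_i$ (which has size $\Omega(n)$), this small set cannot dominate all but $\delta n$ of $S$, so a legal target vertex exists.

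The main obstacle I anticipate is keeping the bookkeeping honest across the concurrent end stages of the different $X_j$'s: Bob may be forced to interleave (2) moves for several sets, and I need to confirm the $10Kl + C'K$ budget really accommodates every contingency. Structurally nothing new is required beyond the four observations collected at $T_i$ and \Cref{generalfewblocks}; all the relevant invariants (colour-count bounds, block-gap, uncoloured-count) hold at $T_i$ and survive the constant-length window trivially, so the argument reduces to a careful accounting rather than any new combinatorial input.
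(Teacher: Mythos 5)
Your overall route matches the paper's: bound the competing (1) moves via the potential $D$ (giving the $C'K$ cap), budget $10Kl$ for the (2) moves across all end stages, and certify legality of each introduction by combining the colour-count bound at $T_i$ with the structural block information. However, one step of your accounting is genuinely wrong: the claim that ``no (3) move is ever triggered'' after $T_i$. A (3) move is called for as soon as some $100C_ll$-set comes within $\delta n$ of becoming a block, whereas the invariant established at $T_i$ only guarantees a gap of $\frac{\delta}{2}n$; so sets at distance between $\frac{\delta}{2}n$ and $\delta n$ may already be triggering (3) at time $T_i$, and blocking sequences may even be mid-execution. What actually protects the window is not that (3) is inapplicable but that it is outranked: since (2) has higher priority than (3) in Bob's list, as long as $X_i$ is in its end stage with colours missing Bob plays only (1) and (2) moves. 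This is precisely the opening observation of the paper's proof, and without it (or some substitute) your budget argument for the window has a hole.

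Your legality argument also deviates from the paper's, in a way that is acceptable but needs one more ingredient made explicit. You apply \Cref{generalfewblocks} directly at the moment of play, with $S$ the current uncoloured part of $X_i$, instead of invoking the maintained invariant that no $100C_ll$-set is within $\frac{\delta}{2}n$ of a block; that is fine (arguably cleaner), provided you justify why the set of $c$-coloured vertices stays below size $100C_ll$. Your ``only $O(1)$ additional instances can arise in the short window'' is quantitatively right, but the mechanism matters: it is because (1) and (2) moves both copy the colour Alice played that each colour missing from $X_i$ can be played at most $2l$ further times before Bob introduces it, yielding the paper's bound $C_ll+2+2l<100C_ll$. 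Tying the $O(1)$ bound to this copying discipline, and noting that the uncoloured part of $X_i$ retains size at least $\frac{\eta}{4}n-O(1)$ so the lemma's hypothesis still applies, closes the argument; with those repairs your proof is essentially the paper's.
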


\begin{proof}
Note that while there are still colours missing from $X_i$ in its end stage, Bob only plays (1) and (2) moves, both of which copy the colour Alice played. Hence, the colours missing from $X_i$ can be played at most $2l$ times before being played into $X_i$. At that stage, the colour is played at most $C_ll+2+2l<100C_ll$ times and no $100C_ll$-set is closer than $\frac{\delta}{2}n$ to becoming a $100C_ll$ block, so no $100C_ll$ block will be formed in the endstage of $X_i$. Hence, we can still play this colour in $X_i$.
As we can introduce all the missing colours and we play at most $C'K$ (1) moves, we need at most $10Kl+C'K$ moves to introduce them all.
%(3) moves are no longer done after $T$ by their definition. In the next $36K$ moves ($18K$ by Bob, $18K$ by Alice) no complete double-block can be created, as all are at least $\frac{\delta}{2} n>36K$ moves away. As at $T$ at least $\epsilon n$ vertices of $N(v)$ are still uncoloured, during the next $36K<\epsilon n$ moves, there are ample uncoloured vertices in $N(v)$. Hence, Bob can (and will as he ceases the (3) moves) introduce a new colour to $N(v)$ every move until all colours appear there.
%So in his every move Bob introduces new colour in $N(v)$ if there are still uncoloured vertices there (note that this colour need not necessarily be new to the graph as a whole), and we had no more than $18K $ colours missing before. Also obviously there still are uncoloured vertices there for $n$ large enough, as at $T$ at least $\epsilon n$ vertices in $N(v)$ were uncoloured.
\end{proof}
Thus, since Bob can introduce all colours into $X_i$ during the end game, the proof of \Cref{genbob} is complete.
\end{proof}

Having proven \Cref{genbob}, we are ready to look at even $n$. 

\section{Even n}
In this section, we shall prove that for particular values of $p$, we can achieve the same lower bound for even $n$ as for odd $n$.

\begin{prop}\label{bigproposition}
Let $p=1/k$ for some $k\in\mathbb{N}$, and $\epsilon>0$. Then whp $\chi_g^\infty(G_{2m,p})\geq (p/2-\epsilon)2m$.
\end{prop}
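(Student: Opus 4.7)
The plan is to generalize the single-vertex strategy of \Cref{oddprop} by fixing a set $X = \{v_1, \dots, v_l\} \subseteq V$ of $l = l(\epsilon,k)$ vertices (with $l$ a large constant) and arranging that, at the end of round 1, every $v_i \in X$ sees all $(p/2-\epsilon)(2m)$ colours in its neighbourhood, in such a robust way that Alice's single move at the start of round 2 (she moves first since $n = 2m$ is even) cannot unstick every $v_i$ simultaneously. Bob then picks a surviving stuck $v_i$ on his next move and wins.

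First I would set up the partition: for $I \subseteq [l]$, let $A_I = \{u \in V \setminus X : N(u) \cap X = \{v_i : i \in I\}\}$. Standard Chernoff bounds give whp $|A_I| = (1 + o(1)) p^{|I|}(1-p)^{l-|I|}(2m)$, and conditional on the edges from $X$, the induced graph on $V \setminus X$ is $G_{2m-l, p}$-distributed. The combinatorial heart is then a separate \emph{$1/k$-lemma}, \Cref{1/klem}: exploiting $p = 1/k$, one exhibits subsets $Y_I \subseteq [(p/2-\epsilon)(2m)]$ with $|Y_I| \leq (1-\eta)|A_I|/2$ for some $\eta = \eta(\epsilon) > 0$ satisfying two properties --- \emph{coverage}: for every colour $c$ and every $i \in [l]$ there exists $J \ni i$ with $c \in Y_J$; and \emph{robustness}: for every colour $c$ there exists $J \neq [l]$ with $c \in Y_J$. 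Applying \Cref{genbob} with these $A_I$ in the role of the $X_i$ and these $Y_I$'s yields a round-1 strategy for Bob that, whp, ensures every colour in $Y_I$ appears in $A_I$ at the end of round 1.

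To conclude Bob's win, the coverage property makes every $v_i \in X$ stuck at the end of round 1, so Alice cannot play any $v_i$ at the start of round 2 and must instead pick some $w \in A_I$ and recolour it from $c_w$ to $c'$. If $I \neq [l]$, then any $j \notin I$ has $v_j$ unaffected by this move and thus still stuck. If $I = [l]$, robustness gives some $J \neq [l]$ with $c_w \in Y_J$, and for any $j \in J$ Bob's round-1 guarantee placed $c_w$ on some vertex $u \in A_J$; since $w \in A_{[l]}$ and $J \neq [l]$ force $u \neq w$, the colour $c_w$ still appears in $N(v_j) \setminus \{w\}$, and $v_j$ remains stuck. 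In either case Bob selects the surviving $v_j$ on his move and wins since $v_j$ has no legal recolouring.

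The main obstacle is \Cref{1/klem}. A quick double count shows each colour needs at least two slots across the $Y_I$'s (neither a singleton $\{[l]\}$ nor a single non-$[l]$ set alone can satisfy both coverage and robustness), giving $2C \leq (1-\eta)\sum_I |A_I|/2 \approx (2m)/2$, i.e.\ $C \leq (2m)/4$; the target $(p/2-\epsilon)(2m)$ fits inside this precisely when $p \leq 1/2$, which holds automatically for every $p = 1/k$ with $k \geq 2$. The harder half is to actually produce a feasible assignment at the target threshold, simultaneously meeting both properties and every per-$A_I$ capacity bound. The hypothesis $p = 1/k$ is essential here: the ratios $|A_I| \propto p^{|I|}(1-p)^{l-|I|}$ admit a balanced combinatorial design matching the LP optimum exactly at these values, whereas for general $p$ the analogous scheme appears to lose a constant factor, consistent with \Cref{conjecture}.
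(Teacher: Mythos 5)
Your overall architecture is exactly the paper's: fix $X$ of constant size $l$, partition $V\setminus X$ into the sets $A_I$ indexed by $I\subseteq[l]$ according to adjacency to $X$, assign colour sets $Y_I$ via a $p=1/k$ lemma, run \Cref{genbob} on this family (sampling the edges at $X$ first so that \Cref{genbob} applies to the remaining randomness), and conclude that Alice's single move at the start of round two cannot simultaneously unstick all of $X$, after which Bob picks a surviving stuck vertex and wins. Your endgame case analysis ($I\neq[l]$ versus $I=[l]$) is the same argument as the paper's remark that ``there is no single vertex whose recolouring changes that.''

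There is, however, a genuine gap: you never prove \Cref{1/klem}, which is precisely where the hypothesis $p=1/k$ enters and is the only new content of this step beyond \Cref{genbob}. Your double count gives only a necessary condition, and the assertion that the ratios $p^{|I|}(1-p)^{l-|I|}$ ``admit a balanced combinatorial design matching the LP optimum exactly at these values'' is a restatement of what must be shown, not an argument --- exhibiting the design is, as you yourself note, the harder half. The paper does it constructively via \Cref{1/kauxlem}: when $p=1/k$ one can weight each partition $T\in\mathcal{B}(X)$ by $g(T)=k^{-l}(k-1)!/(|T|-1)!$ so that $\sum_{T:\,A\in T}g(T)=p^{|A|}(1-p)^{l-|A|}$ for every $A\subseteq X$ (proved by counting ordered partitions of $X\setminus A$ into $k-1$ possibly empty cells, each contributing weight exactly $k^{-l}$); these weights are then converted into disjoint intervals of colours $f'(T)$ for $T$ ranging over all partitions except the trivial one-part partition, and $Y_I=\bigcup_{T:\,I\in T}f'(T)$. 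This construction yields both of your properties in a stronger form: each colour is attached to \emph{all} parts of a single partition with at least two parts, hence lies in $Y_J$ for at least two disjoint nonempty proper subsets $J\subsetneq[l]$. That strengthening also repairs a small slip in your formulation: your robustness condition ``there exists $J\neq[l]$ with $c\in Y_J$'' permits the witness $J=\emptyset$, in which case Bob's second copy of $c_w$ sits in $A_\emptyset$, adjacent to no $v_j$, and your $I=[l]$ case collapses; you need $\emptyset\neq J\subsetneq[l]$, which the partition-based construction supplies automatically since parts of a partition are nonempty.
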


For convenience write $n=2m$. For given $p,\epsilon>0$, fix $l\in\mathbb{N}$, such that $p^l<\epsilon/100$.

\begin{lem}\label{1/klem}
Let $X\subset V(G)$ be a set of $l$ vertices and $p=1/k$ for some $k\in\mathbb{N}$. There exists  $\eta>0$, and a function $f:\mathcal{P}(X)\mapsto \mathcal{P}((p/2-p^l/2-\epsilon)n)$, assigning to every subset $X'\subset X$, $p^{|X'|}(1-p)^{l-|X'|}(1-\eta)\frac{n}{2}$ colours, such that $\bigcup_{X':x\in X'\subset X} f(X')=[(p/2-p^l/2-\epsilon)n]$ for every $x\in X$.
\end{lem}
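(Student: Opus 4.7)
The plan is to exploit the structure $p=1/k$ via the observation that if $u: X \to [k]$ is chosen uniformly at random, then for any non-empty $I \subseteq X$ the probability that $I$ is a fibre $u^{-1}(\ell)$ of $u$ for some $\ell$ equals exactly $k\cdot p^{|I|}(1-p)^{l-|I|}$; moreover, these marginals are automatically consistent with a partition since the fibres of any $u$ partition $X$.

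I would set $\eta := p^{l-1} + 2\epsilon/p$, so that $K := (p/2 - p^l/2 - \epsilon)n = p(1-\eta)n/2$, and partition the palette $[K]$ into $k^l$ groups $\{C_u\}_{u: X\to[k]}$ of (near-)equal size $K/k^l$. For each non-empty $I \subseteq X$ define
\[
f(I) := \bigcup\bigl\{C_u : I = u^{-1}(\ell)\text{ for some }\ell \in [k]\bigr\},
\]
and take $f(\emptyset)$ to be any subset of $[K]$ of the required size $(1-p)^l(1-\eta)n/2$.

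To verify correctness, I would count: the number of $u: X\to [k]$ having a given non-empty $I$ as a fibre is $k(k-1)^{l-|I|}$ (choose a label for $I$, then a non-matching label for each of the remaining $l-|I|$ points), giving $|f(I)| = (K/k^l)\cdot k(k-1)^{l-|I|} = p^{|I|}(1-p)^{l-|I|}(1-\eta)n/2$ as required. For the cover condition, each $c \in [K]$ lies in a unique $C_{u^{(c)}}$, and for any $x \in X$ the fibre $(u^{(c)})^{-1}(u^{(c)}(x))$ is a non-empty set $I_x$ containing $x$ with $c \in f(I_x)$, so $c \in \bigcup_{I \ni x} f(I)$; thus the union equals $[K]$.

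The main obstacle I expect is recognising the right probabilistic interpretation: once one views the target weights $p^{|I|}(1-p)^{l-|I|}$ as fibre-probabilities of a uniform labeling $X \to [k]$ (which crucially uses $p = 1/k$), the construction essentially writes itself. The remaining subtleties are minor: the $O(k^l) = O(1)$ slack from rounding $K/k^l$ to an integer can be absorbed by a tiny perturbation of $\eta$, and fitting $f(\emptyset)$ inside $[K]$ reduces to $(1-p)^l \le p$, which forces $l$ to be sufficiently large in terms of $k$ --- ensured by choosing $l$ large enough at the start.
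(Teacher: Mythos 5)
Your construction does prove the statement as literally written, and it is at heart the \emph{same} construction as the paper's, seen probabilistically: the weight function $g$ of \Cref{1/kauxlem} is (after correcting what is evidently a typo there; it should read $g(T)=k^{-l}\,(k-1)!/(k-|T|)!$, with $(k-1)!/(k-1-m)!$ placements of the $m$ parts among $k-1$ slots) exactly $\tfrac1k$ times the probability that a uniform labelling $u:X\to[k]$ has fibre partition $T$, and your identity $\PP(I\text{ is a fibre of }u)=k\,p^{|I|}(1-p)^{l-|I|}$ is the same computation as the paper's count of ordered partitions of $X\setminus A$ into $k-1$ possibly empty sets. Grouping the palette into classes $C_u$ indexed by labellings and taking $f(I)=\bigcup\{C_u: I\text{ a fibre of }u\}$ is precisely the paper's interval allocation $f'$ over partitions, just refined from partitions to labellings; the consistency-with-a-partition observation plays the role of \Cref{1/kauxlem}.

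The one substantive divergence is your treatment of $X'=X$, and it matters. The paper deletes the trivial partition $\{X\}$ (your constant labellings), so its construction gives $f(X)=\emptyset$ and $\eta=2\epsilon/(p-p^l)$; you retain the constant classes, give $f(X)$ its nominal $p^l(1-\eta)\tfrac n2$ colours, and absorb $p^l$ into $\eta=p^{l-1}+2\epsilon/p$. Your version actually matches the lemma's letter at $X'=X$ better than the paper's own proof does, but it defeats the purpose the lemma serves: in \Cref{bigproposition} one needs that after Alice's single recolouring at the start of round two, some vertex of $X$ still sees all colours, which holds in the paper's construction because every colour is designated to \emph{all} parts of a nontrivial partition, hence to at least two disjoint sets $X_I$. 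Under your $f$, the roughly $p^l(1-\eta)\tfrac n2$ colours in constant classes are designated only to the common neighbourhood $X_X$, so Alice can recolour the unique such occurrence and blind every vertex of $X$ simultaneously; the $-p^l/2$ term in the palette size exists precisely to discard those classes, not merely to be absorbed into $\eta$. Separately, your $f(\emptyset)$ needs $(1-p)^l\le p$, and you cannot ``choose $l$ large'' inside the lemma, since $l$ is given in its hypothesis --- though in fairness the statement's size demand at $X'=\emptyset$ (and at $X'=X$) is a slip in the paper as well: its own construction yields $f(\emptyset)=f(X)=\emptyset$, and the application only uses upper bounds on the $|f(X')|$ together with the covering condition over non-empty parts.
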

To prove this lemma we will use the following auxiliary lemma.

Let $\mathcal{B}(X)$ be the set of all partitions of the set $X$.

\begin{lem}\label{1/kauxlem}
Consider any $ k \in \mathbb{N}$. Let $p=1/k$ and $|X|=l$, then there exists $g:\mathcal {B}(X)\to [0,1]$,  such that for all $A\subset X$;
$$\sum_{T: A\in T\in \mathcal{B}(X)} g(T)= p^{|A|}(1-p)^{l-|A|}$$
\end{lem}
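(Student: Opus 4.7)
The plan is to construct $g$ probabilistically, exploiting the fact that $p=1/k$ is the reciprocal of an integer. First, I would consider a uniformly random coloring $c:X\to[k]$ in which each of the $l$ elements of $X$ is independently assigned one of $k$ colors. This induces a random partition $T_c\in\mathcal{B}(X)$ whose blocks are the nonempty fibers $c^{-1}(j)$ for $j\in[k]$. Then I would set
$$g(T)=\frac{1}{k}\,\PP(T_c=T)$$
for every $T\in\mathcal{B}(X)$. This definition lies in $[0,1/k]\subset[0,1]$, so the range condition holds automatically.

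The core step is to verify the summation identity. Fix a nonempty $A\subset X$. For each color $j\in[k]$, the event $\{c^{-1}(j)=A\}$ occurs precisely when each element of $A$ receives color $j$ and each element of $X\setminus A$ receives a different color, so it has probability $(1/k)^{|A|}\bigl((k-1)/k\bigr)^{l-|A|}=p^{|A|}(1-p)^{l-|A|}$. Since $A$ is nonempty, the events $\{c^{-1}(j)=A\}$ for distinct $j$ are disjoint, so summing over $j\in[k]$ gives $\PP(A\text{ is a block of }T_c)=k\cdot p^{|A|}(1-p)^{l-|A|}$. Dividing by $k$ then yields
$$\sum_{T\in\mathcal{B}(X):\,A\in T}g(T)=\frac{1}{k}\,\PP(A\text{ is a block of }T_c)=p^{|A|}(1-p)^{l-|A|},$$
exactly the identity required.

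The only subtle point is that the lemma quantifies over all $A\subset X$, whereas partitions have only nonempty blocks; taking $A=\emptyset$ would formally demand $0=(1-p)^l$. I would read the statement as implicitly restricted to nonempty $A$, which is both the only sensible interpretation and the only case needed when deriving \Cref{1/klem}. I do not anticipate a genuine obstacle: the construction depends crucially on $p$ being of the form $1/k$, which is exactly what makes the factor of $k$ arising from the disjoint union over colors cancel the factor needed to turn $p^{|A|}(1-p)^{l-|A|}$ into a probability of being a block.
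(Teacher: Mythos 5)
Your construction is exactly the paper's proof in probabilistic dress: your $g(T)=\frac{1}{k}\,\PP(T_c=T)=k^{-l}\frac{(k-1)!}{(k-|T|)!}$ coincides with the weight the paper defines (its printed denominator $(|T|-1)!$ is evidently a typo for $(k-|T|)!$, as your formula and the small case $k=3$, $l=2$, $A=X$ confirm), and your disjoint union over the $k$ choices of the colour of $A$ is precisely the paper's count of ordered partitions of $X\setminus A$ into $k-1$ potentially empty boxes, each contributing $k^{-l}$. The argument is correct, including your (appropriate) reading of the lemma as restricted to nonempty $A$, which is also what the paper implicitly assumes since blocks of a partition are nonempty.
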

\begin{proof}

Define $g$ as 
$$g(T)=\begin{cases}
k^{-l} \frac{(k-1)!}{(|T|-1)!} &\text{ if } |T|\leq l\\
0 &\text{ else}
\end{cases}$$
Fix $A\subset X$ and evaluate $\sum_{T: A\in T\in \mathcal{B}(X)} g(T)$. Consider ordered partitions of $X\setminus A$ into $k-1$ potentially empty sets. Each of these contributes exactly $k^{-l}$ to this sum.

To see this, consider a particular ordered partition of $X\setminus A$ into $k-1$ potentially empty sets, with $m$ non-empty sets. This corresponds to a partition $T$ of $X\setminus A$ into $m$ parts, which has weight $g(T)=k^{-l} \frac{(k-1)!}{(m-1)!}$.
Note that to find ordered partitions into $k-1$ potentially empty sets corresponding to $T$, we need to pick the locations of the $m$ sets among the $k-1$ options. There are $\frac{(k-1)!}{(m-1)!}$ ways to do this. Hence, every ordered partition of $X\setminus A$ into $k-1$ potentially empty sets contributes weight exactly $k^{-l}$ to the sum. 

Noting that there are exactly $(k-1)^{l-|A|}$ ordered partitions of $X\setminus A$ into $k-1$ potentially empty sets, we can evaluate the sum as
$$\sum_{T: A\in T\in \mathcal{B}(X)} g(T)=(k-1)^{l-|A|}k^{-l}=\left(\frac1k\right)^{|A|}\left(\frac{k-1}{k}\right)^{l-|A|}= p^{|A|}(1-p)^{l-|A|}$$
\end{proof}

\begin{proof}[Proof of \Cref{1/klem}]
Let $g:\mathcal{B}(X)\to [0,1]$ as in \Cref{1/kauxlem} and set $\mathcal{B}'(X)=\mathcal{B}(X)\setminus\{X\}$. Note that $\sum_{T\in\mathcal{B}'(X)} g(T)=p(1-p^{l-1})$. Consider any linear order on $\mathcal{B}'(X)$ and let 
$$f':\mathcal{B}'(X)\to \mathcal{P}((p/2-p^l/2-\epsilon)n), T\mapsto \left\{\left\lfloor\sum_{T'<T}  g(T')\right\rfloor \frac{(1-\eta)n}{2}+1,\dots,\left\lfloor\sum_{T'\leq T}  g(T')\right\rfloor\frac{(1-\eta)n}{2}\right\}$$
where $\eta$ is such that $\left\lfloor\sum_{T\in \mathcal{B}'(X)}  g(T)\right\rfloor\frac{(1-\eta)n}{2}=(p/2-p^l/2-\epsilon)n$.
Let
$$f:\mathcal{P}(X)\to\mathcal{P}((p/2-p^l/2-\epsilon)n);X'\mapsto \bigcup_{T\supset X'}f'(T)$$
Hence; 
\begin{align*}
\bigcup_{X':x\in X'\subset X} f(X')&=\bigcup_{X':x\in X'\subset X}\bigcup_{T:T\supset X'}f'(T)\\
&=\bigcup_{T\in \mathcal{B}(X)}f'(T)
\end{align*}

\end{proof}

\begin{proof}[Proof of \Cref{bigproposition}]
Fix $X\subset V$ with $|X|=l$. Sample all edges incident to $X$. For $I\subset X$, let $X_I=\{v\in V\setminus X: N(v)\cap X=I\}$.  Note that whp $|X_I|\geq p^{|I|}(1-p)^{l-|I|}(1-\eta/10)n$ for any $\xi>0$. Use \Cref{1/klem} to find $Y_I=f(I)$, such that $|Y_I|\leq \frac{(1-\eta/10)|X_I|}{2}$. Now sample all the other edges in the graph. By \Cref{genbob}, whp Bob can guarantee that at the end of round one the colours in $Y_I$ appears in $X_I$. By construction of $Y_I$, all vertices in $X$ will see all colours and, moreover, there is no single vertex whose recolouring changes that. Regardless of Alice' first move in the second round, Bob can choose a vertex that sees all colours in his first move in the second round. Thus, the proposition follows.
\end{proof}

Note that the condition $p=1/k$ is essential. In fact, if $p\not\in\{\frac12,\frac13,\dots\}$, then whp for any three vertices $v_1,v_2$ and $v_3$, it is impossible to assign colours $Y_I$ to $X_I=\{v:N(v)\cap \{v_1,v_2,v_3\}=I\}$, such that $|Y_I|\leq (1/2+\eta) |X_I|$ and $\bigcup_{I:i\in I\subset [3]} Y_I=[(p/2-\epsilon)n]$ for every $i\in[3]$. Crucially, \Cref{1/kauxlem} fails to hold. Hence, given that Alice can play in roughly half the vertices in $N(v)$ for all $v\in V$, at most two vertices at the end of every round can see all colours. Some must appear uniquely in the intersection, so Alice can recolour one of these in the first move of the second round. Hence, we cannot expect Bob to win at the beginning of round two. However, it is not immediately clear whether Bob cannot reintroduce the colour successfully. We believe this cannot be done, so we conjecture that 
for all $ p\in(0,1)\setminus \{\frac{1}{2},\frac{1}{3},\dots\},\exists \epsilon>0$ such that whp $\chi_g^{\infty}(G_{n,p}) \leq (1-\epsilon) \frac{pn}{2}$, as stated in \Cref{conjecture}.

\section{Answer to a question of Klostermeyer and Mendoza}

We conclude the paper by answering a question posed by Klostermeyer and Mendoza in their original paper.

They define other variants of the eternal chromatic game on graph. One of them is greedy colouring game, where Bob must colour whatever vertex he chooses with the smallest colour possible. Let $\chi_g^{\infty2} (G)$ be the smallest number $k$ such that when this game is played with $k$ colours on $G$, Alice is guaranteed to win. Further, they consider the variant of game when not only Bob, but also Alice, must use the smallest colour available for each vertex she chooses, and define $\chi_g^{\infty3} (G)$ to be eternal number of the game played with these rules. Note that clearly $\chi_g^{\infty2} (G) \leq \chi_g^{\infty3} (G)$ since Alice can, if she wishes so, choose the smallest colour for each vertex she chooses in any variant of the game and analogously $\chi_g^{\infty2} (G) \leq \chi_g^{\infty} (G)$.

Klostermeyer and Mendoza pose the following question about these new variants of the game.
 
\begin{question}
 Let $G$ be a graph with subgraph or induced subgraph $H$. Is it necessarily true that $\chi_g^{\infty 2}(G) \geq \chi_g^{\infty 2}(H)$ ? Is it necessarily true that $\chi_g^{\infty 3}(G) \geq \chi_g^{\infty 3}(H)$?
\end{question}
Indeed, it is not true. Consider the following example. 

\begin{prop}
For $n\geq2$,
$\chi_g^{\infty3}(K_{1,2n+1})=3$ and $\chi_g^{\infty2}(K_{1,2n})\geq 4$.
\end{prop}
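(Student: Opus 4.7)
My plan is to prove the two inequalities in turn; in both cases the parity of $|V(G)|$ determines who starts each round, and this asymmetry is what drives the proof. For $\chi_g^{\infty3}(K_{1,2n+1})=3$: the lower bound $\chi_g^{\infty3}(K_{1,2n+1})\ge 3$ is easy because with only two colours every proper colouring of a star forces the centre into one colour and every leaf into the other, after which no vertex has a legal recolouring in round $2$, so Alice cannot avoid losing. For the upper bound $\chi_g^{\infty3}(K_{1,2n+1})\le 3$, note that $K_{1,2n+1}$ has $2n+2$ (even) vertices, so Alice opens every round. Her strategy is to pick the centre first in each round; since both players play the forced smallest legal colour in the $\infty3$ variant, once Alice picks the centre everything else that round is determined. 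A short induction shows that the round-end configurations cycle periodically (centre running through $1,3,2$, each leaf taking the smaller of the two colours not equal to the new centre colour), so every move is legal and the game continues forever.

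For $\chi_g^{\infty2}(K_{1,2n})\ge 4$ I will show that with only $k=3$ colours Bob wins. The key parity point is that $K_{1,2n}$ has $2n+1$ (odd) vertices, so Bob opens round $2$. Let $c$ be the centre's colour at the end of round $1$, and split on what the leaves look like at that moment. If both of the non-$c$ colours appear on the leaves, then the centre's set of forbidden colours (its current colour together with its neighbours' colours) is already $\{1,2,3\}$, so no legal recolouring of the centre exists and Bob simply picks it to win. Otherwise all leaves carry a single colour $d$ and the third colour $e$ is absent; Bob first picks any leaf, for which the unique legal (hence $\infty2$-smallest) choice is $e$. After Bob's move the centre has neighbours of both colours $d$ and $e$ and cannot be recoloured; Alice's only legal move is thus to pick one of the remaining $d$-leaves and turn it into $e$, which is the unique legal colour for a $d$-leaf whose neighbour is still $c$. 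The hypothesis $n\ge 2$ ensures that at least one $d$-leaf survives after Alice's move, so the centre still has both $d$ and $e$ among its neighbours, and Bob wins on his next move by choosing the centre.

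The main obstacle will be Case~2 above: I must verify carefully that Alice has no escape through a different vertex or a different colour. The point is that with three colours on a star the recolouring constraints are so tight that, starting from the state immediately after Bob's first round-$2$ move, every move of Alice is uniquely forced up to the choice of which $d$-leaf to play, and that choice is irrelevant. Combining the three steps with the general inequality $\chi_g^{\infty2}\le \chi_g^{\infty3}$ stated earlier in the excerpt gives the desired counterexample to the question of Klostermeyer and Mendoza: $K_{1,2n}$ is an induced subgraph of $K_{1,2n+1}$, yet $\chi_g^{\infty3}(K_{1,2n+1})=3<4\le \chi_g^{\infty2}(K_{1,2n})\le \chi_g^{\infty3}(K_{1,2n})$.
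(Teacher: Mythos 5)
Your proof is correct and takes essentially the same route as the paper: the upper bound $\chi_g^{\infty3}(K_{1,2n+1})\le 3$ via Alice colouring the centre first each round (after which all moves are forced and the configurations cycle), and the lower bound $\chi_g^{\infty2}(K_{1,2n})\ge 4$ via the parity observation that Bob opens round two, the case split on whether the leaves are monochromatic, and Bob's blocking move introducing the missing colour into $N(x)$ so that the centre has no legal recolouring — your explicit treatment of the trivial bound $\chi_g^{\infty3}(K_{1,2n+1})\ge 3$ and of the greedy constraint on Bob are details the paper leaves implicit. One cosmetic slip: your parenthetical leaf rule (``the smaller of the two colours not equal to the new centre colour'') fails when the centre moves from colour $3$ to colour $2$, where the leaves go from $1$ to $3$; the correct forced rule is that each leaf takes the unique colour distinct from both its current colour and the new centre colour, which does not affect the argument.
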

\begin{proof}
For $\chi_g^{\infty3}(K_{1,2n+1})=3$ note that Alice starts every round as the number of vertices is even. Every round she will first play in the central vertex which will become the unique element from $[3]$ not yet appearing in the graph. All the other vertices will now become the former colour of the central vertex.

For $\chi_g^{\infty3}(K_{1,2n})\geq 4$. assume for a contradiction 3 colours suffice and note that Bob begins the second round. Let $x$ for the central vertex. Then $x$ is either adjacent to two different colours or $N(x)$ is monochromatic. In the former case, Bob plays in $x$ and finds that there is no colour available, a contradiction.\\
In the latter case, Bob plays in $N(x)$, bringing the number of colours in $N(x)$ to two. Hence, Alice cannot play in $x$. She can also not bring down the number of colours in $N(x)$ as it contains at least three vertices. Thence, when Bob gets to play his second move in the second round, and plays $x$, he finds no colours available, again a contradiction. 
\end{proof}

Note that $H=K_{1,2n}$ is an (induced) subgraph of $G=K_{1,2n+1}$, and 
$$\chi_g^{\infty2}(G)\leq\chi_g^{\infty3}(G)\leq\chi_g^{\infty2}(H)\leq\chi_g^{\infty3}(H)$$
This answers all the subquestions in the negative.

Finally, note that while there is no clear relationship between $\chi_g^{\infty3} (G)$ and $\chi_g^{\infty} (G)$ for general graphs $G$, in our definition of strategy of Alice in section 2, we let her always play the smallest colour available, and so in particular we show $\chi_g^{\infty3} (G_{n,p}) \leq  (\frac{p}{2}+o(1))n$ whp.

\section*{Acknowledgements}
The authors are grateful to their PhD  supervisor B\'ela Bollob\'as for his useful comments and support.

\bibliographystyle{abbrv}
\bibliography{references}

\begin{thebibliography}{10}

\bibitem{bodlaender}
H.~L. Bodlaender.
\newblock On the complexity of some coloring games.
\newblock In {\em Proceedings of the 16rd International Workshop on
  Graph-Theoretic Concepts in Computer Science}, WG '90, pages 30--40, Berlin,
  Heidelberg, 1991. Springer-Verlag.

\bibitem{bohman}
T.~Bohman, A.~Frieze, and B.~Sudakov.
\newblock The game chromatic number of random graphs.
\newblock {\em Random Structures \& Algorithms}, 32:223--235, 2007.

\bibitem{bollobas1988chromatic}
B.~Bollob{\'a}s.
\newblock The chromatic number of random graphs.
\newblock {\em Combinatorica}, 8(1):49--55, 1988.

\bibitem{dinski}
T.~Dinski and X.~Zhu.
\newblock A bound for the game chromatic number of graphs.
\newblock {\em Discrete Math.}, 196(1-3):109--115, Feb. 1999.

\bibitem{faigle}
U.~Faigle, W.~Kern, H.~Kierstead, and W.~Trotter.
\newblock On the game chromatic number of some classes of graphs.
\newblock {\em Ars combinatoria}, 35:143--150, 1993.

\bibitem{Frieze2011OnTG}
A.~Frieze, S.~Haber, and M.~Lavrov.
\newblock On the game chromatic number of sparse random graphs.
\newblock {\em SIAM J. Discrete Math.}, 27:768--790, 2011.

\bibitem{gardner}
M.~Gardner.
\newblock Mathematical games.
\newblock {\em Scientific American}, 244, 1981.

\bibitem{guan1999game}
D.~Guan and X.~Zhu.
\newblock Game chromatic number of outerplanar graphs.
\newblock {\em Journal of Graph Theory}, 30(1):67--70, 1999.

\bibitem{keusch}
R.~Keusch and A.~Steger.
\newblock The game chromatic number of dense random graphs.
\newblock {\em Electronic Journal of Combinatorics}, 21, 06 2014.

\bibitem{klostermeyer}
W.~Klostermeyer and H.~Mendoza.
\newblock The eternal game chromatic number of a graph.
\newblock {\em arXiv preprint arXiv:1811.02332}, 2018.

\bibitem{zhu1999game}
X.~Zhu.
\newblock The game coloring number of planar graphs.
\newblock {\em Journal of Combinatorial Theory, Series B}, 75(2):245--258,
  1999.

\end{thebibliography}
 
\appendix
\section{Proofs of structural results in random graph} \label{structure}

In this appendix, we provide proofs of various structural results about $G_{n,p}$ that were used in earlier proofs. Some of them will be shown in a more general form. One of our main tools will be the following well-known form of Hoeffding's Inequality.

\begin{lemma}\label{hoeffding}
For any $\epsilon>0, n \in \mathbb{N},$ and $ p \in (0,1)$, $\PP(\text{Bin}(n,p) \geq (p+\epsilon)n) \leq \exp(-2 \epsilon^2 n)$ and $\PP(\text{Bin}(n,p) \leq (p-\epsilon)n) \leq \exp(-2 \epsilon^2 n)$.
\end{lemma}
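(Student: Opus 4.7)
The plan is to prove both tail bounds by the classical Chernoff / exponential moment method, which reduces the problem to a short calculus exercise known as Hoeffding's lemma. Write $X = \sum_{i=1}^n X_i$ where the $X_i$ are i.i.d.\ Bernoulli$(p)$, so $X \sim \mathrm{Bin}(n,p)$.

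For the upper tail, I apply Markov's inequality to $e^{tX}$ for arbitrary $t>0$:
\[
\PP\bigl(X \geq (p+\epsilon)n\bigr) \leq e^{-t(p+\epsilon)n}\,\EE[e^{tX}] = e^{-t(p+\epsilon)n}\bigl(pe^t + 1-p\bigr)^n.
\]
The key lemma I need is: for any $Y \in [0,1]$ with $\EE Y = \mu$,
\[
\EE\bigl[e^{t(Y-\mu)}\bigr] \leq e^{t^2/8}.
\]
Granted this, $\EE[e^{t(X-pn)}] \leq e^{nt^2/8}$, so $\PP(X \geq (p+\epsilon)n) \leq \exp(-t\epsilon n + nt^2/8)$. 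Optimizing at $t = 4\epsilon$ makes the exponent exactly $-2\epsilon^2 n$, giving the desired bound.

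The lower tail is immediate by symmetry: putting $Y := n - X \sim \mathrm{Bin}(n, 1-p)$, we have
\[
\PP\bigl(X \leq (p-\epsilon)n\bigr) = \PP\bigl(Y \geq ((1-p)+\epsilon)n\bigr) \leq \exp(-2\epsilon^2 n)
\]
by the upper-tail case with $p$ replaced by $1-p$.

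The only real content is Hoeffding's lemma, and I expect that to be the main technical step. My plan for it is the standard convexity argument: since $y \mapsto e^{ty}$ is convex, for $y \in [0,1]$ we have $e^{ty} \leq y e^t + (1-y)$, hence $\EE[e^{tY}] \leq \mu e^t + 1-\mu$. Define $\phi(t) := \log(\mu e^t + 1-\mu) - \mu t$; then $\phi(0) = \phi'(0) = 0$, and a direct computation gives $\phi''(t) = q(1-q)$ where $q = \mu e^t/(\mu e^t + 1-\mu) \in (0,1)$, so $\phi''(t) \leq 1/4$ uniformly in $t$. By Taylor's theorem $\phi(t) \leq t^2/8$, which rearranges to the claimed bound. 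Everything else in the argument is mechanical.
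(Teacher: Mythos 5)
Your proof is correct. Note that the paper itself offers no proof of this lemma at all: it is quoted in the appendix as a ``well-known form of Hoeffding's Inequality'' and used as a black box, so there is no argument to compare yours against. What you give is the standard exponential-moment derivation, and every step checks out: the convexity bound $e^{ty}\leq ye^t+(1-y)$ for $y\in[0,1]$ gives $\EE[e^{tY}]\leq \mu e^t+1-\mu$; the computation $\phi''(t)=q(1-q)\leq \frac14$ with $q=\mu e^t/(\mu e^t+1-\mu)$ is right, so Taylor's theorem with $\phi(0)=\phi'(0)=0$ yields $\phi(t)\leq t^2/8$; the optimization $t=4\epsilon$ turns $-t\epsilon n+nt^2/8$ into exactly $-2\epsilon^2 n$; and the lower tail does follow from the upper tail applied to $n-X\sim\mathrm{Bin}(n,1-p)$, since $\{X\leq (p-\epsilon)n\}=\{n-X\geq ((1-p)+\epsilon)n\}$. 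The one pedantic point worth being aware of is that for $\mu\in\{0,1\}$ one has $q\in\{0,1\}$ rather than $q\in(0,1)$, but the bound $\phi''\leq\frac14$ still holds there, and in any case the lemma is applied with $p\in(0,1)$, so nothing breaks.
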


Note that Hoeffding's inequality implies \ref{vertexcondn} from Section 2 and \ref{nodegreesmall} from Section 3.
% We first prove the following result, from which \Cref{vertexcondn} follows trivially by setting $\alpha=\frac{\epsilon}{100}$.

% \begin{lemma}\label{degrees}
% Fix $\alpha > 0, p \in (0,1)$. Then whp, no vertex of $G_{n,p}$ has degree greater than $(p+ \alpha)n$.
% \end{lemma}

% \begin{proof}
% Fix any vertex $v$ in $G_{n,p}$. Its degree has distribution $\text{Bin}(n-1,p)$. So using Hoeffding's Inequality,

% \begin{align*}
% \PP(d(v) \geq (p + \alpha) n ) \leq \PP(\text{Bin}(n,p) \geq  (p + \alpha) n ) \\
% \leq \exp(-2 \alpha^2 n)
% \end{align*}

% So by union bound, probability that any vertex has degree greater than $(p+ \alpha) n$ is at most $n \exp(-2 \alpha^2 n) \rightarrow 0 $ as $n \rightarrow \infty$.
% \end{proof}

To prove \ref{fewcolours} and \ref{othercolours} from Section 2, we first prove the following result.

\begin{lemma} \label{fewalmostfull}
For all $\alpha>0$, $p \in (0,1)$, there exist constants $ K=K(\alpha,p),\beta=\beta(\alpha,p)>0$ such that whp the following holds. For any colouring of $G_{n,p}$ with $\alpha n$ colours, number of vertices that have all but at most $\beta n$ colours in their neighbourhood is at most $K \log n$.
\end{lemma}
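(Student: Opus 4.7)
The plan is to control the probability of the bad event that there exists a colouring $\phi$ of $V(G_{n,p})$ with $\alpha n$ colours together with a set $S \subset V$ of size $K\log n$ whose every member has at least $(\alpha - \beta)n$ distinct colours of $\phi$ in its neighbourhood. I would attack this by a three-step argument: a Hoeffding-type estimate for a single vertex under a fixed partition, an independence argument upgrading it to the joint event for the $K\log n$ vertices in $S$, and finally a union bound over partitions and subsets.

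For the single-vertex step, fix an ordered partition $\phi = (C_1, \ldots, C_{\alpha n})$ of $V$ into non-empty parts and a vertex $v \in V$. The indicators $\mathbf{1}[N(v) \cap C_i \neq \emptyset]$ depend on pairwise disjoint subsets of edges (the edges from $v$ into $C_i$), so they are independent Bernoullis with parameters of order $1 - (1-p)^{|C_i|}$. Consequently $X_v := |\phi(N(v))|$ is a sum of independent Bernoullis, and Jensen's inequality applied to the convex function $x \mapsto (1-p)^x$ together with $\sum_i |C_i| = n$ gives
\begin{equation*}
\mathbb{E}[X_v] \leq \alpha n \bigl(1 - (1-p)^{1/\alpha}\bigr) + O(1).
\end{equation*}
Choosing $\beta = \beta(\alpha, p)$ to be any positive constant strictly less than $\alpha(1-p)^{1/\alpha}$ leaves a linear gap $(\alpha - \beta)n - \mathbb{E}[X_v] \geq \delta n$ for some $\delta = \delta(\alpha,p)>0$, so Hoeffding yields $\Pr[X_v \geq (\alpha - \beta)n] \leq \exp(-cn)$ for some $c = c(\alpha, p) > 0$.

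For the joint step, take a set $S = \{v_1, \ldots, v_k\} \subset V$ of size $k = K\log n$. Since $|\phi(N(v_i) \cap S)| \leq k$, almost-fullness of $v_i$ implies the event $A_i := \{|\phi(N(v_i) \setminus S)| \geq (\alpha - \beta)n - k\}$, and crucially $A_i$ depends only on the edges from $v_i$ into $V \setminus S$. These edge sets are pairwise disjoint across $i$, so the $A_i$ are mutually independent. Applying the single-vertex estimate with the ground set restricted to $V \setminus S$ (this only shifts the relevant quantities by $O(k) = O(\log n)$, which is absorbed into the constants) gives $\Pr[A_i] \leq \exp(-c'n)$ for some $c' = c'(\alpha,p)>0$, and hence
\begin{equation*}
\Pr\bigl[v_1, \ldots, v_k \text{ all almost full under } \phi\bigr] \leq \Pr\Bigl[\bigcap_i A_i\Bigr] = \prod_i \Pr[A_i] \leq \exp(-c' k n).
\end{equation*}

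A crude union bound then finishes the job: there are at most $(\alpha n)^n \leq n^n$ choices of $\phi$ and at most $n^k$ choices of $S$, so the failure probability is at most $n^{n+k}\exp(-c'kn) = \exp\bigl((1 - c'K)n\log n + O((\log n)^2)\bigr)$, which tends to $0$ for any $K > 1/c'$. The main obstacle is securing clean independence across the $v_i$ in the presence of edges inside $S$; passing from the original almost-fullness event to the restricted event $A_i$ circumvents it at the cost of a negligible shift of $k = O(\log n)$ in the threshold.
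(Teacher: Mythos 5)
Your proposal is correct, but it takes a genuinely different route from the paper's. You tame the adversarial colouring by a brute-force union bound over all $(\alpha n)^n$ colourings, handling arbitrary colour-class sizes with Jensen applied to the convex function $x\mapsto(1-p)^x$ to get $\mathbb{E}\,|\phi(N(v))|\leq \alpha n\bigl(1-(1-p)^{1/\alpha}\bigr)+O(1)$, followed by Hoeffding around this mean. The paper never unions over colourings: it first pigeonholes --- in any colouring with $\alpha n$ colours, at least $\frac{\alpha}{2}n$ colours appear at most $\frac{2}{\alpha}$ times --- then discards the at most $K\log n$ of these touching $S$ to extract $\frac{\alpha}{4}n$ small colour classes $T_1,\dots,T_{\alpha n/4}$ disjoint from $S$, and union-bounds only over the structure $(S,T_1,\dots,T_{\alpha n/4})$; the probabilistic input is then a clean binomial $\text{Bin}(\frac{\alpha n}{4},q)$ with $q=1-(1-p)^{2/\alpha}$, so the binomial form of Hoeffding stated in the appendix suffices. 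Both arguments share the two load-bearing ideas: keeping $S$ edge-disjoint from the relevant colour classes so that the $K\log n$ per-vertex events are mutually independent (your passage from $N(v_i)$ to $N(v_i)\setminus S$, at a negligible $O(\log n)$ shift in the threshold, plays exactly the role of the paper's insistence that the $T_i$ avoid $S$), and using that independence to place $cKn\log n$ in the exponent, which is what beats the $\Theta(n\log n)$ entropy --- your $n\log(\alpha n)$ from colourings, the paper's roughly $\frac{n}{2}\log n$ from choosing the $T_i$. What your route buys is directness (no rare-colour extraction) and a better explicit threshold: any $\beta<\alpha(1-p)^{1/\alpha}$ works, compared with the paper's $\beta=\alpha(1-p)^{2/\alpha}/8$. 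What it costs is needing Hoeffding for sums of independent but non-identically distributed Bernoullis (standard, but not literally the binomial statement the paper quotes), plus two cosmetic repairs: $(\alpha n)^n\leq n^n$ requires $\alpha\leq 1$ (harmless, since the extra $e^{n\log\alpha}$ term is lower order, but the lemma as stated allows $\alpha>1$), and your ``partition into non-empty parts'' should permit empty colour classes, which only help since $1-(1-p)^0=0$.
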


\begin{proof}
Let $q=1-(1-p)^{2/ \alpha}$, $\beta=\frac{\alpha (1-q)}{8}$, and $ K=\frac{4}{(1-q)^2}$. 

Note that in any colouring of $G_{n,p}$ by $\alpha n$ colours, we have $\frac{\alpha}{2}n$ colours appearing at most $\frac{2}{\alpha}$ times each. 

Assume there exists a set $S$ of $K \log n$ vertices missing at most $\beta n$ colours each. For $n$ satisfying $K \log n < \frac{\alpha}{4} n$), there exists a set $C$ of $\frac{\alpha}{4}n$ colours appearing at most $\frac{2}{\alpha}$ times each such that no vertex in $S$ has any colour from $C$. In particular, there must be mutually disjoint sets of vertices $S,T_{1},...,T_{\frac{\alpha}{4}n}$, such that $|T_{i}| \leq \frac{2}{\alpha}$ for each $i$, $|S|=K \log n$ and each vertex in $S$ is joined to at least $(\frac{\alpha}{4}-\beta)n$ sets $T_{i}$ in our graph.

Now, we consider the probability that such structure exists in $G_{n,p}$. For $n$ sufficiently large, we find there are $\sum_{i=1}^{2/ \alpha} {n \choose i} \leq 2 {n \choose 2/ \alpha}$ ways of choosing each of the sets $T_i$. So for such large $n$, we have at most

\begin{align*}
{n \choose K \log n} \left(2 {n \choose 2/ \alpha}\right)^{\frac{\alpha n}{4}} &\leq n^{K \log n} 2^{\frac{\alpha n}{4}} \left(\frac{e \alpha n}{2}\right)^{\frac{\alpha n}{4}}\\ 
&=  \exp\left(K (\log n)^2+ \frac{\alpha n}{4} \log n + \frac{\alpha}{4}n \log \left(e \alpha\right) \right)
\end{align*}
ways to choose sets $S,T_{1},...,T_{\alpha n/4} $. Now for any such fixed choice, the probability that these sets satisfy the conditions is at most

\begin{align*}
\PP\left(\text{Bin}\left(\frac{\alpha n}{4},q\right) \geq \left(q+\frac{1-q}{2}\right)\frac{\alpha n}{4} \right)^{K \log n} \leq \exp \left(-2\left(\frac{(1-q)}{2}\right)^2 \frac{\alpha n}{4} K \log n\right)
\end{align*}

The union bound then gives that the probability of finding appropriate $S,T_1,\dots,T_{\alpha n/4}$
\begin{align*}
\exp\Bigg(K (\log n)^2+ &\frac{\alpha n}{4} \log n + \frac{\alpha}{4}n \log \left(e \alpha\right)-2\left(\frac{(1-q)}{2}\right)^2 \frac{\alpha n}{4} K \log n \Bigg) \\
&=\exp\left(K (\log n)^2 + \frac{\alpha}{4}n \log \left(e \alpha\right)+\left(1-K\frac{(1-q)^2}{2}\right) \frac{\alpha n}{4}  \log n \right)\\
&=\exp\left(K (\log n)^2 + \frac{\alpha}{4}n \log \left(e \alpha\right)- \frac{\alpha n}{4}  \log n \right)=o(1)
\end{align*}
The result follows.
\end{proof}

To conclude \ref{fewcolours}, simply plug in $\alpha=(\frac{p}{2}+ \epsilon)$, and note that if some value of $\beta>0$ works, then any smaller one does too, so we can insist on $\beta$ being not too large.

To conclude \ref{othercolours}, plug in $\alpha=\frac{\epsilon}{200}$ and note that presence of other colours only helps us, as the result would still hold even if all the other vertices were also coloured in $\frac{\epsilon}{200} n$ small colours.

The following implies \ref{fewdangers} from section 2.

\begin{lemma}
Fix any $\epsilon,$ and $\delta$ greater than $0$. Assume $K \in \N$ is fixed, such that $K>\frac{6\epsilon}{\delta^2}$. 
Whp, if $A,B\subset V(G)$ are disjoint subsets with $|A|=|B| \geq  \epsilon n$, then there are less than $K$ vertices connected to at least $\delta n$ more vertices in $B$ than in $A$.
\end{lemma}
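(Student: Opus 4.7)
The plan is a straightforward first-moment union-bound argument. I would fix disjoint subsets $A,B\subset V$ with $|A|=|B|=s$, where $\epsilon n\le s\le n/2$, together with a candidate witness set $S\subset V\setminus(A\cup B)$ of size $K$, and show that the probability every vertex of $S$ has $|N(v)\cap B|-|N(v)\cap A|\ge \delta n$ is small enough to absorb a union bound over all such triples.

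For the single-vertex estimate, I would observe that for $v\notin A\cup B$ the quantity $|N(v)\cap B|-|N(v)\cap A|$ is a sum of $|A|+|B|=2s$ independent $\{0,\pm 1\}$-valued random variables of range $1$, with mean $0$ because $|A|=|B|$. Hoeffding's inequality (\Cref{hoeffding}) then yields
\[
\PP\bigl(|N(v)\cap B|-|N(v)\cap A|\ge\delta n\bigr)\le \exp(-\delta^{2}n^{2}/s).
\]
Crucially, for distinct $v\in S$ these events depend on edges out of distinct vertices and are therefore independent, so the probability that every $v\in S$ is bad is at most $\exp(-K\delta^{2}n^{2}/s)$.

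For the union bound, for each fixed $s$ there are at most $\binom{n}{s}^{2}\binom{n}{K}\le 4^{n}n^{K}$ admissible triples $(A,B,S)$, and summing over the $O(n)$ possible sizes of $s$ costs only a polynomial factor. The total failure probability is therefore at most $n^{K+1}\cdot 4^{n}\cdot \exp(-K\delta^{2}n^{2}/s)$; since $n^{2}/s\ge 2n$ for $s\le n/2$, this tends to $0$ provided $K\delta^{2}$ exceeds an absolute constant of order $\log 2$, which the hypothesis $K>6\epsilon/\delta^{2}$ is arranged to deliver.

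The main obstacle is really just the arithmetic of ensuring that the exponent $K\delta^{2}n$ dominates the $4^{n}$ coming from the number of pairs $(A,B)$ in the worst case $s\simeq n/2$; no additional structural input about $G_{n,p}$ is required beyond edge independence and Hoeffding. Notably $p$ drops out of the single-vertex tail bound because the recentering is trivial (the mean is exactly $0$), which is why $K$ can be chosen independently of $p$.
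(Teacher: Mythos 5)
Your overall strategy coincides with the paper's: both are first-moment arguments that union-bound over the choices of $(A,B,S)$ and use Hoeffding for the tail. The paper aggregates the $K$ bad vertices into the single edge count $e(B',S)-e(A',S)$ and applies the binomial Hoeffding bound once, with a two-case split around the mean $|B'|\,|S|\,p$; you instead keep $K$ per-vertex events and multiply them, using that edges out of distinct vertices of $S$ are independent, with the mean-zero centering from $|A|=|B|$ making $p$ drop out. That is a legitimate and slightly cleaner variant. Two local repairs are needed, though. You restricted to $S\subseteq V\setminus(A\cup B)$, whereas the statement counts \emph{all} vertices, including those of $A\cup B$; the paper covers this by passing to $A'=A\setminus S$, $B'=B\setminus S$ at an $O(K^2)$ cost in the counts, and you need the same reduction --- after which $|A'|$ and $|B'|$ may differ by up to $K$, so the mean is no longer exactly $0$ but only $O(K)=o(\delta n)$, a point you should address explicitly since you singled out exact centering as the crux (it also underpins your independence claim, which is only clear when $S$ is disjoint from $A\cup B$). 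Also, the Hoeffding lemma in the appendix is stated only for binomials, so either invoke the general bounded-range version or split your event into the two one-sided binomial tails $|N(v)\cap B|\ge sp+\delta n/2$ and $|N(v)\cap A|\le sp-\delta n/2$; this only changes constants.

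The genuine gap is your final sentence. Your computation correctly shows that the union bound closes exactly when $K\delta^2$ exceeds an absolute constant ($K\delta^2>\log 2$ with your estimates, driven by the worst case $s=n/2$), and this is \emph{not} implied by the hypothesis $K>6\epsilon/\delta^2$ once $\epsilon<(\log 2)/6$; for instance, if $\epsilon<\delta^2/6$ the hypothesis permits $K=1$. No patch can rescue that, because the lemma as stated is then false: if $\delta<\min(p,1-p)$ and $\epsilon<\delta^2/6$, then whp every vertex $v$ admits $B\subseteq N(v)$ and $A$ among the non-neighbours of $v$ with $|A|=|B|=\delta n\ge \epsilon n$, so $v$ itself is connected to $\delta n$ more vertices of $B$ than of $A$, defeating the conclusion with $K=1$. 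You should know that the paper's own proof stumbles at exactly the same spot: the chain $\exp\bigl(-2\bigl(\frac{\delta n}{8|B'|}\bigr)^2|B'|K\bigr)\le \exp\bigl(-\frac{\delta^2 nK}{2\epsilon}\bigr)$ is valid only when $|B'|\le \epsilon n/16$, while in fact $|B'|\ge \epsilon n-K$ and can be of order $n$. Since the lemma is applied only with $\epsilon=\delta$ (property (ii) of Section 2), where $K=K(\epsilon,p)$ may be chosen freely, the right fix --- which your argument, honestly accounted, actually proves --- is to strengthen the hypothesis to $K>C/\delta^2$ for an absolute constant $C$; with that restated hypothesis your proof is complete modulo the two repairs above.
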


\begin{proof}
Assume for a contradiction $n\geq \frac{2K}{\epsilon}$ and there exist $A,B,$ as stated in the lemma such that there are at least $K$ vertices connected to at least $\delta n$ more vertices in $B$ than in $A$. Let $S$ be a collection of $K$ such vertices.
Let $A'=A\setminus S$ and $B'=B\setminus S$. Note that $e(A',S)\leq e(A,S)$ and $e(B',S)\geq e(B,S)-K^2$, so that 
$$e(B',S)-e(A',S)\geq e(B,S)-e(A,S)-K^2\geq K\delta n-K^2\geq K\delta n/2.$$ Hence, either $e(B',S)\geq (|B'|\cdot |S| p)+\delta Kn/8$ or $e(A',S)\leq (|B'|\cdot |S| p)-3\delta Kn/8\leq (|A'|\cdot |S| p)-\delta Kn/8$.
The probability of the former (the latter follows analogously) is given by;
\begin{align*}
\PP\left(\text{Bin}(|B'|\cdot |S|, p)\geq (|B'| K p)+\delta Kn/8\right)&\leq \exp\left(-2 \left(\frac{\delta n}{8|B'|}\right)^2|B'| K\right)\\
&\leq\exp\left(- \frac{\delta^2 nK}{2\epsilon }\right)
\end{align*}

%Denote by $A'$ such subset of $A$ that $|A'|=\frac{\epsilon n}{2}$, $A'$ and $S$ are disjoint and among all such, $e(A',S)$ is minimal. Similarly denote by $B'$ such subset of $B$ that $|B'|=\frac{\epsilon n}{2}$, $B'$ and $S$ are disjoint and among all such, $e(B',S)$ is maximal. Now, as long as $n$ is large enough, we see that if every $v \in S$ is connected to at least $\delta n$ more vertices in $B$ than in $A$, then either $e(B',S) \geq \frac{\epsilon}{2} n |S| (p+ \frac{\delta}{2 \epsilon})$ or $e(A',S) \leq \frac{\epsilon}{2} n |S| (p- \frac{\delta}{2 \epsilon})$. But by Hoeffding's Inequality and the union bound, probability that at least one of these two events happens for uniformly randomly chosen mutually disjoint sets $A',B',S$ with $|A'|=|B'|=\frac{\epsilon}{2}n$, $|S|=K$, is at most

%\begin{align*}
%2\exp\left(-2\left(\frac{\delta}{2\epsilon}\right)^2 \frac{\epsilon}{2}nK\right)=2\exp\left(-\frac{\delta^2}{4 \epsilon}nK\right)
%\end{align*}

We can choose sets $A,B,$ and $S$ in at most

\begin{align*}
{n \choose |A|}^2  {n \choose K} \leq 2^{3n} 
\end{align*}
ways. Thus, the probability that any such sets $A,B$ and $S$ exist is at most 

\begin{align*}
2 \exp\left(-\frac{\delta^2}{2 \epsilon}nK + 3 n \log 2\right)  \rightarrow 0  
\end{align*}
provided $K>\frac{6\epsilon}{\delta^2}>\frac{6\epsilon}{\delta^2} \log 2$.
\end{proof}

The proof of \ref{fewblocks} from section 3 follows from the fact that for $\delta(p)$ sufficiently small and positive, whp there exists no three vertices $u,v,w$ such that the number of vertices in the neighbourhood of $u$, but not in the neighbourhood of $v$ or $w$ is at most $\delta n$ by Hoeffding's Inequality. \ref{fewblocks} follows directly from the following setting $m=2$.

\Cref{generalfewblocks} follows in the same manner, this time using the particular $m$ we need.

%\begin{lemma}
%Fix $N>0$, $p \in (0,1)$. Then for any $K>\frac{8}{(1-p)^4}\log(\frac{2e}{N})$ such that whp, $G_{n,p}$ does not contain any collection of sets $S,T_{1},...,T_{k}$ such that $T_{1},...,T_{k}$ are all mutually disjoint, $|S|=N n$, $|T_{1}|=...=|T_{k}|=2$ and for every $T_{i}$, all but at most $\frac{(1-p)^2}{4} N n$ vertices of $S$ are connected to at least one vertex in $T_{i}$.
%\end{lemma}
%\begin{proof}
%Provided $n$ is large enough, we can find $S' \subset S$ such that $|S'|=\frac{N}{2} n$ and $S'$ is disjoint from all of $T_{1},...,T_{k}$. For any such fixed $S',T_{1},...,T_{k}$, by Hoeffding's Inequality, probability of this occurring is at most

%\begin{align*}
%\exp(-2 \frac{(1-p)^4}{16} \frac{N}{2}Kn) = \exp{(-\frac{(1-p)^4 N K}{16} n)}
%\end{align*}

%Also there are at most
%\begin{align*}
%{n \choose  2}^K  {n \choose N n/2} \leq n^{2K} (\frac{2e}{N})^{ N n/2}
%\end{align*}

%ways to choose such sets $S',T_{1},...,T_{k}$. So as long as $K> \frac{8}{(1-p)^4}\log(\frac{2e}{N})$, the result follows.
%\end{proof}

\begin{lemma}
Fix $m\in\mathbb{N}$, $\gamma>0$, $p \in (0,1)$. Then for any $K>\frac{4}{\gamma(1-p)^{2m}}$, whp $G_{n,p}$ does not contain any collection of sets $S,T_{1},...,T_{K}$ such that $T_{1},...,T_{K}$ are all mutually disjoint, $|S| \geq \gamma n$, $|T_{1}|=...=|T_{K}|=m$ and for every $T_{i}$, all but at most $\frac{(1-p)^m}{4} \gamma n$ vertices of $S$ are connected to at least one vertex in $T_{i}$.
\end{lemma}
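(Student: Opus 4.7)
The plan is to apply a union bound over the choice of $S$ and $T_1, \ldots, T_K$, bounding the probability of the bad event using Hoeffding's Inequality and leveraging the disjointness of the $T_i$'s to extract independence between the events associated with different $T_i$'s.

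Set $q = (1-p)^m$, so for any fixed $m$-set $T$ and any fixed vertex $v \notin T$, the probability that $v$ has no neighbour in $T$ equals $q$. Fix a set $S \subseteq V$ with $|S| \geq \gamma n$ and disjoint $m$-sets $T_1, \ldots, T_K$, and let $X_i$ denote the number of vertices of $S$ that are not in $N(T_i)$. Since the paper's convention places every $T_i$ inside $N(T_i)$, $X_i$ is a sum of $|S \setminus T_i|$ i.i.d.\ Bernoulli$(q)$ indicators. The key point is that because the $T_i$'s are mutually disjoint, the sets of edges determining distinct $X_i$'s are disjoint, so $X_1, \ldots, X_K$ are mutually independent.

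Next I would apply \Cref{hoeffding} to each $X_i$. The mean satisfies $q|S \setminus T_i| \geq q(\gamma n - m) \geq \tfrac{3q\gamma n}{4}$ for $n$ large, so $\{X_i \leq \tfrac{q\gamma n}{4}\}$ requires a downward deviation of at least $\tfrac{q\gamma n}{2}$. A short monotonicity check on $N \mapsto (qN - q\gamma n/4)^2/N$ over $N \in [\gamma n, n]$ shows its minimum is attained at $N = \gamma n$ and equals $\tfrac{9}{16} q^2 \gamma n$, so Hoeffding gives
$$\PP\bigl(X_i \leq \tfrac{q\gamma n}{4}\bigr) \leq \exp\bigl(-\tfrac{9}{8} q^2 \gamma n\bigr).$$
By independence, the probability that this happens simultaneously for all $i \in [K]$ is at most $\exp\bigl(-\tfrac{9}{8} K q^2 \gamma n\bigr)$.

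The final step is the union bound. There are at most $2^n$ choices for $S$ and at most $\binom{n}{m}^K \leq n^{mK}$ choices for the ordered tuple $(T_1, \ldots, T_K)$, so the probability that any bad configuration exists is at most
$$\exp\bigl(n \log 2 + mK \log n - \tfrac{9}{8} K q^2 \gamma n\bigr).$$
The hypothesis $K > \tfrac{4}{q^2 \gamma}$ yields $\tfrac{9}{8} K q^2 \gamma > \tfrac{9}{2} > \log 2$, so the exponent is eventually strictly negative and linear in $n$, making the bound $o(1)$. I don't expect any substantive obstacle; the argument mirrors \Cref{fewalmostfull} and the earlier appendix lemmas, and the only points requiring care are (i) recording that the independence of the $X_i$'s comes precisely from the disjointness of the $T_i$'s, and (ii) tracking the constants carefully enough to see that $K > 4/(q^2 \gamma)$ is indeed large enough to absorb the $n \log 2$ term coming from the choice of $S$.
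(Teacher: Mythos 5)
Your overall architecture is the paper's own: union bound over $(S,T_1,\dots,T_K)$, Hoeffding on each $T_i$, and independence across the $T_i$'s raised to the $K$-th power to beat the $2^n$ cost of choosing $S$. But there is a genuine flaw in the step you yourself flag as the key point: the claim that disjointness of the $T_i$'s makes the edge sets determining the $X_i$'s disjoint is false, because nothing in the lemma prevents $S$ from intersecting the $T_j$'s. Concretely, if $u\in S\cap T_j$ and $v\in S\cap T_i$ with $i\neq j$, then the single edge $uv$ is examined both for $X_i$ (does $u\in S\setminus T_i$ have a neighbour in $T_i$?) and for $X_j$ (does $v\in S\setminus T_j$ have a neighbour in $T_j$?), so $X_i$ and $X_j$ share Bernoulli indicators and are in general positively correlated, not independent. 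Restricting each $X_i$ to $S\setminus T_i$ removes only the overlap with $T_i$ itself, not with the other blocks, so your product bound $\exp\bigl(-\tfrac98 Kq^2\gamma n\bigr)$ is not justified as written.

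The repair is exactly the paper's first move, which your write-up omits: before applying Hoeffding, pass to a subset $S'\subseteq S\setminus\bigcup_{i}T_i$ of size, say, $\gamma n/2$ (possible once $n>2Km/\gamma$, since the $T_i$'s cover only $Km=O(1)$ vertices). The bad event is inherited downward: if all but at most $\tfrac{(1-p)^m}{4}\gamma n$ vertices of $S$ are covered by $T_i$, the same holds for $S'$. With $S'$ disjoint from every $T_i$, the variables $X_i'=|\{v\in S': v\notin\bigcup_{t\in T_i}N(t)\}|$ really are determined by pairwise disjoint edge sets $S'\times T_i$, and your computation then goes through verbatim with $N=|S'|=\gamma n/2$ (the paper gets $\exp\bigl(-\tfrac14 Kq^2\gamma n\bigr)$ per configuration this way; your slightly sharper constant $\tfrac98$ at $N=\gamma n$ is immaterial). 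Everything else in your proposal is sound and matches the paper: the monotonicity check showing the worst case is the smallest admissible $|S'|$, the counting $2^n\cdot n^{mK}$, and the arithmetic that $K>4/(q^2\gamma)$ makes the exponent negative and linear in $n$. (One could alternatively salvage your version by conditioning on the at most $K^2m^2=O(1)$ shared edges, which shifts each mean by $O(1)$, but the subset trick is cleaner and is what the paper does.)
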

\begin{proof}
Provided $n>\frac{2Km}{\gamma}$ , we can find $S' \subset S$ such that $|S'|=\frac{\gamma n}{2} $ and $S'$ is disjoint from all of $T_{1},...,T_{K}$. For any such fixed $S',T_{1},...,T_{K}$, by Hoeffding's Inequality, the probability that for each $T_{i}$, all but at most $\frac{(1-p)^m}{4} \gamma n$ vertices of $S'$ are connected to at least one vertex in $T_{i}$ is at most

\begin{align*}
\P\left(\text{Bin}\left(\frac{\gamma}{2}n,(1-p)^m\right) \leq \frac{(1-p)^m}{2}\gamma n - \frac{(1-p)^m}{4}\gamma n\right)^K
& \leq \exp\left(-2 \frac{(1-p)^{2m}}{4} \frac{\gamma}{2}nK\right) \\
&= \exp\left(-\frac{(1-p)^{2m} \gamma Kn}{4}. \right)
\end{align*}

There are at most
\begin{align*}
{n \choose  m}^K  {n \choose \gamma n/2} &\leq n^{mK} 2^n\\
&=\exp\left(mK \log n + n\log(2) \right)
\end{align*}

ways to choose such sets $S',T_{1},...,T_{K}$. So, as long as $K> \frac{4}{\gamma(1-p)^{2m}}> \frac{4}{\gamma(1-p)^{2m}}\log(2)$, the result follows.
\end{proof}
\end{document}